\documentclass[11pt]{article}

\usepackage{amsfonts}
\usepackage{amssymb}
\usepackage{amsmath}
\usepackage{amsthm}
\usepackage{graphicx}
\usepackage{color}
\usepackage[english]{babel}

\addtolength{\textwidth}{2.2cm} 
\addtolength{\oddsidemargin}{-1.1cm}
\addtolength{\evensidemargin}{-1.1cm}
\addtolength{\topmargin}{-.75cm}

\newtheorem{notation}{Notations}[section]
\newtheorem{remarque}[notation]{Remark}
\newtheorem{thm}[notation]{Theorem}

\newtheorem{defin}[notation]{Definition} 

\newtheorem{prop}[notation]{Proposition}
\newtheorem{lem}[notation]{Lemma}

\newcommand{\gga}{\gamma}           
\newcommand{\gd}{\delta}
\newcommand{\gphi}{\varphi}

\newcommand{\gG}{\Gamma}

\newcommand{\gD}{\Delta}

\newcommand{\gl}{\lambda}

\newcommand{\eps}{\varepsilon}

\newcommand{\R}{\mathbb R}

\newcommand{\N}{\mathbb{N}}
\newcommand{\E}{\mathbb E}

\newcommand{\cC}{\mathcal C}
\newcommand{\cH}{\mathcal{H}}
\newcommand{\cU}{\mathcal U}
\newcommand{\cM}{\mathcal M}
\newcommand{\bbar}{\big{|}}

\newcommand{\dive}{\operatorname{div}}

\newcommand{\bEm}{\mathbb{E}_m}
\newcommand{\FCb}{\mathcal{F}\cC_b^1}
\newcommand{\FCc}{\mathcal{F}\cC_c^1}
\newcommand{\barf}{\overline{F}}
\newcommand{\hath}{\hat{h}}
\newcommand{\vgamma}[1]{\gga\left(#1\right)}
\newcommand{\Pgamma}[1]{P_\gga(#1)}

\newcommand{\normH}[1]{|#1|_H}

\newcommand{\totvar}[1]{\int_X \normH{D_\gga #1} }
\newcommand{\totvarn}[1]{\int_{\R^m} |D_{\gga_m} #1| }
\newcommand{\sprod}[2]{\langle #1, #2 \rangle}

\newcommand{\Ldeu}{L^2_\gga(X)}

\bibliographystyle{plain}
\begin{document}
\title{\Large Approximation and relaxation of perimeter in the Wiener space}

\author{
        M. Goldman
        \footnote{CMAP, CNRS UMR 7641, Ecole Polytechnique,
        91128 Palaiseau, France, email: goldman@cmap.polytechnique.fr}
        \and M. Novaga
        \footnote{Dipartimento di Matematica, Universit\`a di Padova,
        via Trieste 63, 35121 Padova, Italy, email: novaga@math.unipd.it}
}
\date{}
\maketitle

\begin{abstract}
\noindent We characterize the relaxation of the perimeter in an infinite dimensional Wiener space, with respect to the weak $L^2$-topology.
We also show that the rescaled Allen-Cahn functionals approximate this relaxed functional in the sense of $\Gamma$-convergence.
\end{abstract}

\section{Introduction}

\noindent Extending the variational methods and the geometric measure theory from the Euclidean 
to the Wiener space  
has recently attracted a lot of attention.
In particular, the theory of functions of bounded variation in infinite dimensional spaces started with the works by Fukushima and Hino 
\cite{fuku,fukuhino}. Since then, the fine properties of $BV$ functions and sets of finite perimeter have been investigated 
in \cite{AMMP,AMP,AF,ADPP}. 
We point out that this theory is closely related to older works by M. Ledoux and P. Malliavin \cite{ledoux,malliavin}.  

\noindent In the Euclidean setting it is well-known that the perimeter 
can be approximated by means of more regular functionals of the form 
\[
\int \left(\frac{\eps}{2}|\nabla u|^2 + \frac{W(u)}{\eps}\right)\, dx
\]
when $\eps$ tends to zero, in the sense of $\Gamma$-convergence with respect to the strong $L^1$-topology 
\cite{modicamortola,modica}. An important ingradient in this proof is the compact embedding of $BV$ in $L^1$.

\noindent A natural question is whether a similar approximation property holds
in the infinite dimensional case.
The main goal of this paper is answering to this question 
by computing the $\gG$-limit, as $\eps\to 0$, of the Allen-Cahn-type functionals
(see Section \ref{notation} for precise definitions)
\[
F_\eps(u)= \int_X  \left(\frac{\eps}{2}\normH{\nabla_H u}^2 +\frac{W(u)}{\eps}\right) d\gga .
\]

\noindent In the Wiener space there are two possible definitions of gradient, and consequently two different notions of Sobolev spaces, functions of bounded variation and perimeters \cite{AMMP,ADPP}. In one definition the compact embedding of $BV_\gga(X)$ in $L^1_\gga(X)$ still holds \cite[Th. 5.3]{AMMP}
and the $\gG$-limit of $F_\eps$ is, as expected, the perimeter up to a multiplicative constant. 
We do not reproduce here the proof of this fact, since it is very similar to the Euclidean one. 

\noindent A more interesting situation arises when we consider the other definition of gradient, which gives rise 
to a more invariant notion of perimeter and is therefore commonly used in the literature \cite{fuku,fukuhino,AMMP}. 
In this case, the compact embedding of $BV_\gga(X)$ in $L^1_\gga(X)$ does not hold anymore. In particular 
sequences with uniformly bounded $F_\eps$-energy are not generally 
compact in the (strong) $L^1_\gga$-topology, even though they are bounded in $\Ldeu$, and hence compact with respect to the weak $\Ldeu$-topology.
This suggests that the right topology for considering the $\gG$-convergence should rather be the weak $\Ldeu$-topology. 

\noindent A major difference with the finite dimensional case is the fact that the perimeter function defined by
\[
F(u)=\left\{\begin{array}{ll}
\Pgamma{E} \qquad& \textrm{if } u=\chi_E
\\[8pt]
+\infty \qquad & \textrm{otherwise}
\end{array}\right.
\]
is no longer lower semicontinuous in this topology, and therefore cannot be the $\gG$-limit of the functionals $F_\eps$.
The problem is that the sets of finite perimeter are not closed under weak convergence of the characteristic functions. 
However, it is possible to compute the relaxation $\barf$ of $F$ (Theorem \ref{relax}), which reads:
\[
\barf(u)=\left\{\begin{array}{ll}
\displaystyle \int_X \sqrt{\cU^2(u)+|D_\gga u|^2}\,d \gga \qquad& \textrm{if } 0\le u\le 1 
\\[8pt]
+\infty \qquad & \textrm{otherwise.}
\end{array}\right.
\] 
Such functional is quite familiar to people studying log--Sobolev and isoperimetric inequalities in Wiener spaces 
\cite{bakryledoux,bobkov,CK}. 

\noindent Our main result is to show that the $\gG$-limit of $F_\eps$, with respect to the weak $\Ldeu$-topology,
is a multiple of $\barf$ (Theorem \ref{modmortMal}). 
The proof relies on the interplay between symmetrization, semicontinuity and isoperimetry.

\noindent The plan of the paper is the following. In Section \ref{notation} we recall some basic facts about Wiener spaces and functions of bounded variation. In Section \ref{rapehr} we give the main properties of the Ehrhard symmetrizations. We also prove a P\'olya-Szeg\"o inequality and a Bernstein-type result 
in the Wiener space (Propositions \ref{ehrfunc} and \ref{probern}), which we believe to be interesting in themselves.
In Section \ref{secrelax}, we use the Ehrhard symmetrization to compute the relaxation of the perimeter (Theorem \ref{relax}). 
Finally, in Section \ref{MMM} we compute the $\gG$-limit of the functionals $F_\eps$ (Theorem \ref{modmortMal}) and discuss some consequences of this result.

\smallskip

\noindent{\bf Acknowledgements:} The authors wish to thank Michele Miranda for valuable discussions. 
The first author would like also to thank the Scuola Normale di Pisa for the
kind hospitality, and Luigi Ambrosio for the invitation and the interest in this work.


\section{Wiener space and functions of bounded variation}\label{notation}

\noindent A clear and comprehensive reference on the Wiener space is the book by Bogachev \cite{boga} (see also \cite{malliavin}). 
We follow here closely the notation of \cite{AMMP}. Let $X$ be a separable Banach space and let $X^*$ be its dual. We say that $X$ is a Wiener space if it is endowed with a non-degenerate centered Gaussian probability measure $\gga$. That amounts to say that $\gga$ is a probability measure for which $x^*\sharp \gga$ is a centered Gaussian measure on $\R$ for every $x^*\in X^*$. The non-degeneracy hypothesis means that $\gga$ is not concentrated 
on any proper subspace of $X$.

\noindent As a consequence of Fernique's Theorem \cite[Th. 2.8.5]{boga}, for every $x^*\in X^*$, the function $R^* x^*(x)=\sprod{x^*}{x}$ is in $\Ldeu=L^2(X,\gga)$. Let $\cH$ be the closure of $R^*X^*$ in $\Ldeu$; the space $\cH$ is usually called the reproducing kernel of $\gga$. Let $R$, the operator from $\cH$ to $X$, be the adjoint of $R^*$ that is, for $\hath \in \cH$,
\[R \hath=\int_X x \hath(x)\, d\gga \]
where the integral is to be intended in the Bochner sense. It can be seen that $R$ is a compact and injective operator. We will let $Q=RR^*$. We denote by $H$ the space $R\cH$. This space is called the Cameron-Martin space. It is a separable Hilbert space with the scalar product given by
\[
[h_1,h_2]_H=\sprod{\hath_1}{\hath_2}_{\Ldeu}
\]
if $h_i=R\hath_i$. We will denote by $\normH{\cdot}$ the norm in $H$. 
The space $H$ is a dense subspace of $X$, with compact embedding, and $\gga(H)=0$ if $X$ is of infinite dimension. 

\noindent For $x_1^*, .., x_m^*\in X^*$ we denote by $\Pi_{x_1^*, .., x_m^*}$ the projection from $X$ to $\R^m$ given by
\[\Pi_{x_1^*,.., x_m^*}(x)=(\sprod{x_1^*}{x},.., \sprod{x_m^*}{x}).\]
We will also denote it by $\Pi_m$ when  specifying the points $x_i^*$ is unnecessary. Two elements $x_1^*$ and $x_2^*$ of $X^*$ will be called orthonormal if the corresponding $h_i=Qx_i^*$ are orthonormal in $H$. We will fix in the following an orthonormal base of $H$ given by $h_i=Q x_i^*$.

\noindent We also denote by $H_m=\textrm{span}(h_1, .., h_m)\simeq \R^m$ and $X_m^\perp=\textrm{Ker}(\Pi_m)=\overline{H_m^\perp}^X$,
so that $X=\R^m\oplus X_m^\perp$.
The map $\Pi_m$ induces the decomposition $\gga=\gga_m \otimes\gga_m^\perp$, with $\gga_m,\,\gga_m^\perp$ Gaussian measures
on $\R^m,\,X_m^\perp$ respectively.

\begin{prop}[\cite{boga}]
Let $\hath_1, .., \hath_m$ be in $\cH$ then the image measure of $\gga$ under the map
\[\Pi_{\hath_1, .., \hath_m} (x)= (\hath_1(x), .. , \hath_m(x))\]
is a Gaussian in $\R^m$. If the $\hath_i$ are orthonormal, then such measure is the standard Gaussian measure on $\R^m$.
\end{prop}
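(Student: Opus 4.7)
The plan is to reduce everything to the defining property of $\gga$, exploiting the density of $R^*X^*$ in $\cH$. First I would handle the case where each $\hath_i=R^*x_i^*$ with $x_i^*\in X^*$. For any $a\in\R^m$, the linear combination $\sum_i a_i\sprod{x_i^*}{\cdot}=\sprod{\sum_i a_i x_i^*}{\cdot}$ is the pairing with an element of $X^*$, hence is a centered real Gaussian by hypothesis on $\gga$. The Cram\'er--Wold device then forces $\Pi_{R^*x_1^*,\ldots,R^*x_m^*}\sharp\gga$ to be a centered Gaussian on $\R^m$, whose covariance matrix has entries $\int_X\sprod{x_i^*}{x}\sprod{x_j^*}{x}\,d\gga(x)=\sprod{R^*x_i^*}{R^*x_j^*}_{\Ldeu}$.

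Next I would pass to arbitrary $\hath_i\in\cH$ by approximation. Choose $x_{i,n}^*\in X^*$ with $R^*x_{i,n}^*\to\hath_i$ in $\Ldeu$. The pushforward $\mu_n=\Pi_{R^*x_{1,n}^*,\ldots,R^*x_{m,n}^*}\sharp\gga$ is then a centered Gaussian with some covariance $C_n$, and $L^2$-convergence makes $C_n$ converge entrywise to the matrix $C$ with $C_{ij}=\sprod{\hath_i}{\hath_j}_{\Ldeu}$. On one hand the characteristic functions $\xi\mapsto\exp(-\tfrac12\xi^\top C_n\xi)$ tend pointwise to $\exp(-\tfrac12\xi^\top C\xi)$; on the other hand, $L^2$-convergence of the coordinate functions combined with dominated convergence identifies the same limit with the characteristic function of $\Pi_{\hath_1,\ldots,\hath_m}\sharp\gga$. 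L\'evy's continuity theorem then yields that the latter is centered Gaussian with covariance $C$.

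The orthonormality claim is then immediate: if $\sprod{\hath_i}{\hath_j}_{\Ldeu}=\delta_{ij}$, the covariance $C$ is the identity, and $\Pi_{\hath_1,\ldots,\hath_m}\sharp\gga$ is the standard Gaussian on $\R^m$. The only real subtlety lies in the approximation step, where one needs both the weak convergence $\mu_n\to\Pi_{\hath_1,\ldots,\hath_m}\sharp\gga$ and the closedness of the centered Gaussian class under such convergence with convergent covariance matrices; both are consequences of the $L^2$-control on the coordinate functions, and are standard facts once one has set up the characteristic-function argument above.
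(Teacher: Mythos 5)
The paper states this proposition without proof, citing Bogachev's book, so there is no internal argument to compare against. Your proof --- Cram\'er--Wold for the case $\hath_i=R^*x_i^*$, then $L^2_\gga$-approximation using the density of $R^*X^*$ in $\cH$ and convergence of characteristic functions --- is the standard argument and is correct; the only cosmetic remark is that once you have identified the characteristic function of $\Pi_{\hath_1,\ldots,\hath_m}\sharp\gga$ as $\xi\mapsto e^{-\frac12\xi^\top C\xi}$ (by $L^2$-convergence of $\sum_j\xi_j\sprod{x_{j,n}^*}{\cdot}$ to $\sum_j\xi_j\hath_j$ plus dominated convergence), uniqueness of characteristic functions already concludes, so L\'evy's continuity theorem is not actually needed.
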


\noindent Given $u\in \Ldeu$, we will consider the canonical cylindrical approximation $\bEm$ given by
\[\bEm u (x)=\int_{X_m^\perp} u(\Pi_m(x), y) \,d\gga_m^\perp(y).\]
Notice that $\bEm u$ is a cylindrical functions depending only on the first $m$ variables, and $\bEm u$ converges  to $u$ in $\Ldeu$.

\noindent We will denote by $\FCb(X)$ the space of cylindrical  $\cC^1$ bounded functions that is the functions of the form $v(\Pi_m (x))$ with $v$ a $\cC^1$ bounded function from $\R^m$ to $\R$. 
We denote by $\FCb(X,H)$ the space generated by all functions of the form 
$\Phi h$, with $\Phi\in \FCb(X)$ and $h\in H$.

\noindent We now give the definitions of gradients, Sobolev spaces functions of bounded variation. 
Given $u: X\rightarrow \R$ and $h=R\hath \in H$, we define
\[\frac{\partial u} {\partial h} (x)=\lim_{t\to 0} \,\frac{u(x+th)-u(x)}{t}\]
whenever the limit exists, and
\[\partial_h^* u= \frac{\partial u}{\partial h} -\hath(u). \]
We define $\nabla_H u: X\rightarrow H$,  the gradient of $u$ by
\[\nabla_H u= \sum_{i=1}^{+\infty} \frac{\partial u}{\partial h_i}\,  h_i\]
and the divergence of $\Phi: X\rightarrow H$ by
\[\dive_\gga \Phi =\sum_{i=1}^{+\infty} \partial_{h_i}^* [\Phi,h_i]_H.\]

\noindent The operator $\dive_\gga$ is the adjoint of the gradient so that for every $u \in \FCb(X)$ and every $\Phi \in \FCb(X,H)$, the following integration by parts holds:
\begin{equation}\label{integpart}
\int_X u \dive_\gga \Phi \,d\gga =-\int_X [\nabla_H u, \Phi]_H d\gga.
\end{equation}
\noindent The $\nabla_H$ operator is thus closable in $\Ldeu$ and we will denote by $H^1_\gga(X)$ its closure in $\Ldeu$. From this, formula \eqref{integpart} still holds for $u \in H^1_\gga(X)$ and $\Phi \in \FCb(X,H)$.

\noindent Following \cite{fuku,AMMP}, given $u\in L^1_\gga(X)$ we say that $u\in BV_\gga(X)$ if
\[\totvar{u}=\sup \left\{ \int_X u \dive_\gga \Phi \, d\gga; \; \Phi \in \FCb(X,H), \; \normH{\Phi}\le 1 \; \forall x\in X\right\}<+\infty.\] 
We will also denote by $|D_\gga u|(X)$ the total variation of $u$. If $u=\chi_E$ is the characteristic function of a set $E$ we will denote $P_\gga(E)$ its total variation and say that $E$ is of finite perimeter if $\Pgamma{E}$ is finite. 
As shown in \cite{AMMP} we have the following properties of $BV_\gga(X)$ functions.

\begin{thm}\label{defBV}
Let $u\in BV_\gga(X)$ then the following properties hold:
\begin{itemize}
\item $D_\gga u$ is a countably additive measure on X with finite total variation and values in $H$ (we will note the space of these measures by $\cM(X,H)$), such that for every $\Phi \in \FCb(X)$ we have:
\[\int_X u \, \partial^*_{h_j} \Phi \; d \gga=- \int_X \Phi d\mu_j \qquad \forall j\in \N\]
where $\mu_j=[h_j,D_\gga u]_H$.
\item $|D_\gga u|(X)=\inf \varliminf \{ \int_X \normH{\nabla_H u_i} d\gga \;: \; u_j \in H^1_\gga(X), \; u_j\rightarrow u \textrm{ in } L^1_\gga(X) \}$.
\end{itemize}
\end{thm}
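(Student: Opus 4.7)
The plan is to treat the two items separately, with the first essentially a Riesz representation and the second a standard two-sided inequality against a mollified approximation.

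For item 1, I would first observe that for $\Phi\in\FCb(X)$ we have $\dive_\gga(\Phi h_j)=\partial^*_{h_j}\Phi$, since $h_j$ is a constant (hence divergence-free in the Gaussian sense) vector field and $[h_j,h_j]_H=1$. Therefore the hypothesis $u\in BV_\gga(X)$ says exactly that the linear functional
\[
L\colon \FCb(X,H)\to\R, \qquad L(\Phi)=-\int_X u\,\dive_\gga\Phi\,d\gga
\]
is continuous with respect to the seminorm $\Phi\mapsto\sup_{x\in X}\normH{\Phi(x)}$, with operator norm equal to $\totvar{u}$. Because $X$ is a Polish (Radon) space and cylindrical functions in $\FCb(X,H)$ are uniformly dense in the bounded continuous $H$-valued fields on every compact, $L$ extends by Hahn-Banach/Daniell integration to a countably additive $H$-valued Radon measure $D_\gga u\in\cM(X,H)$ of total variation $\totvar{u}$. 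Testing against $\Phi h_j$ with $\Phi\in\FCb(X)$ yields the stated componentwise formula, with $\mu_j=[h_j,D_\gga u]_H$.

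For item 2, write $G(u)=\inf\bigl\{\varliminf_j\int_X\normH{\nabla_H u_j}\,d\gga : u_j\in H^1_\gga(X),\ u_j\to u \text{ in } L^1_\gga(X)\bigr\}$. The easier inequality $\totvar{u}\le G(u)$ comes from lower semicontinuity of the BV seminorm under $L^1_\gga$-convergence: for any $\Phi\in\FCb(X,H)$ with $\normH{\Phi}\le 1$, integration by parts \eqref{integpart} gives
\[
\int_X u\,\dive_\gga\Phi\,d\gga=\lim_j\int_X u_j\,\dive_\gga\Phi\,d\gga=-\lim_j\int_X[\nabla_H u_j,\Phi]_H\,d\gga\le\varliminf_j\int_X\normH{\nabla_H u_j}\,d\gga,
\]
and taking the supremum over admissible $\Phi$ yields the claim.

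The reverse inequality $G(u)\le\totvar{u}$ is the substantive half and will be the main obstacle, because it requires an explicit smooth approximation in infinite dimensions. The natural recipe combines two regularizations. First apply the cylindrical projection $\bEm u$: by Jensen's inequality and Fubini $\bEm u$ lies in $BV_{\gga_m}(\R^m)$ with $|D_{\gga_m}\bEm u|(\R^m)\le\totvar{u}$, and $\bEm u\to u$ in $L^1_\gga$ as $m\to\infty$. Second, on the finite dimensional factor $\R^m$ one mollifies against a smooth Gaussian kernel (equivalently, applies the Ornstein-Uhlenbeck semigroup $T_t$) to obtain $u_{m,t}\in H^1_\gga(X)$ with
\[
\int_X\normH{\nabla_H u_{m,t}}\,d\gga\le|D_{\gga_m}\bEm u|(\R^m)\le\totvar{u},
\]
where the first inequality follows from the commutation of the Gaussian convolution with the gradient and the contraction property of $T_t$ on $L^1$. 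A diagonal extraction $t=t(m)\to 0$ slowly enough then produces a sequence in $H^1_\gga(X)$ converging to $u$ in $L^1_\gga$ whose gradient-$L^1$ norms are asymptotically bounded by $\totvar{u}$, proving $G(u)\le\totvar{u}$. The delicate step is controlling the loss in the Pólya-type gradient bound through the infinite-dimensional mollification and verifying that the chosen $t(m)$ yields genuine $L^1_\gga$-convergence, which is where the separability of $H$ and the density of cylindrical functions are used.
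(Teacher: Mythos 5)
This theorem is not proved in the paper at all: it is stated with the preface ``As shown in \cite{AMMP}'' and is simply imported from that reference, so there is no internal proof to compare yours against. That said, your architecture is the standard one from \cite{AMMP}: the identity $\dive_\gga(\Phi h_j)=\partial^*_{h_j}\Phi$ is correct and reduces the first item to representing the bounded functional $L$; the second item splits into lower semicontinuity plus a recovery sequence, and your two-step smoothing (cylindrical projection $\bEm$, which contracts the total variation by Jensen and converges by martingale convergence, followed by a finite-dimensional Ornstein--Uhlenbeck/Mehler regularization, which contracts exactly) is a legitimate variant of the semigroup argument used there.

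There are, however, two genuine gaps. First, ``Hahn--Banach/Daniell'' does not by itself yield a countably additive $H$-valued measure of finite total variation. Scalar Riesz representation applied to $\Phi\mapsto\int_X u\,\partial^*_{h_j}\Phi\,d\gga$ produces each signed measure $\mu_j$, but the substance of the first item is that $B\mapsto(\mu_j(B))_j$ actually lands in $H$ and that its total variation computed with the $H$-norm (i.e. the supremum of $\sum_i\normH{(\mu_j(B_i))_j}$ over Borel partitions) is finite and equals $\totvar{u}$; this needs an argument (in \cite{AMMP} it comes from testing with fields valued in $H_m$ and letting $m\to\infty$), and countable additivity of the vector measure requires tightness, not merely uniform density of cylindrical fields on compacta. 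Second, and more seriously, your ``easier inequality'' is not immediate as written: $\partial^*_{h}\Phi=\partial_h\Phi-\hath\,\Phi$ contains the unbounded linear factor $\hath$, so $\dive_\gga\Phi\notin L^\infty(X,\gga)$ and the step $\int_X u_j\,\dive_\gga\Phi\,d\gga\to\int_X u\,\dive_\gga\Phi\,d\gga$ does not follow from $u_j\to u$ in $L^1_\gga(X)$ alone. This is precisely why \cite{AMMP} sets up the theory in $L(\log L)^{1/2}(X,\gga)$, where Fernique's theorem provides the duality with $\exp(L^2)$. The gap is fixable --- truncate at level $N$, note that truncation decreases $\int_X\normH{\nabla_H u_j}\,d\gga$ and that a uniformly bounded sequence converging in $L^1_\gga$ also converges in $L^2_\gga$ where $\dive_\gga\Phi$ lives, then let $N\to\infty$ using the coarea formula --- but as stated the semicontinuity step does not go through.
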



\begin{prop}
Let $u=v(\Pi_m)$ be a cylindrical function then $u\in BV_\gga(X)$ if and only if $v\in BV_{\gga_m}(\R^m)$. We then have
\[\totvar{u}=\totvarn{v}.\]
\end{prop}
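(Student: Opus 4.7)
\emph{Plan.} I will match the two sup-characterizations of total variation: the one defining $|D_\gga u|(X)$ via vector fields in $\FCb(X,H)$ with $\normH{\cdot}\le 1$, and the analogous one for $|D_{\gga_m}v|(\R^m)$. Decompose $x=(y,z)\in\R^m\times X_m^\perp$, so that $\gga=\gga_m\otimes\gga_m^\perp$, $u(x)=v(y)$, $\hath_j(x)=y_j$ for $j\le m$, and $\hath_j$ depends only on $z$ for $j>m$.

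For the inequality $\totvar{u}\ge\totvarn{v}$, given a competitor $\Psi=(\Psi_1,\dots,\Psi_m)$ on $\R^m$ with $|\Psi|\le 1$, I would lift it to $\Phi(x):=\sum_{j=1}^m \Psi_j(\Pi_m(x))\,h_j$, which belongs to $\FCb(X,H)$ with $\normH{\Phi(x)}=|\Psi(\Pi_m(x))|\le 1$. A short Fubini computation gives $\int_X u\,\dive_\gga\Phi\,d\gga=\int_{\R^m} v\,\dive_{\gga_m}\Psi\,d\gga_m$, and taking the supremum over $\Psi$ yields the inequality (so in particular $v\in BV_{\gga_m}$ implies $u\in BV_\gga$).

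For the reverse direction, I start from an arbitrary $\Phi=\sum_j\phi_j h_j\in\FCb(X,H)$ with $\normH{\Phi}\le 1$ and perform two reductions. First, truncate to $\tilde\Phi:=\sum_{j\le m}\phi_j h_j$; the discarded tail contributes nothing because, for $j>m$, Fubini and one-dimensional Gaussian integration by parts on $X_m^\perp$ against the constant $1$ yield
\[
\int_X u\,\partial^*_{h_j}\phi_j\,d\gga=\int_{\R^m}v(y)\Big(\int_{X_m^\perp}\partial^*_{h_j}\phi_j(y,\cdot)\,d\gga_m^\perp\Big)\,d\gga_m(y)=0.
\]
Second, replace each $\phi_j$ ($j\le m$) by its canonical cylindrical projection $\psi_j:=\bEm\phi_j$. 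Jensen gives $\sum_j\psi_j^2\le\bEm\big(\sum_j\phi_j^2\big)\le 1$ pointwise, so the $H$-norm bound is preserved; and since $\bEm$ commutes with $\partial_{y_j}$ while $u$ depends only on $y$, one checks $\int_X u\,\partial^*_{h_j}\phi_j\,d\gga=\int_{\R^m} v\,\partial^*_{y_j}\psi_j\,d\gga_m$. Summing over $j=1,\dots,m$ turns the original supremand into $\int_{\R^m}v\,\dive_{\gga_m}\psi\,d\gga_m$ with $|\psi|\le 1$, so the sup is bounded by $\totvarn{v}$, and equality of the two total variations follows.

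The only delicate step is the first reduction: ensuring that components in directions $h_j$ with $j>m$ disappear. This is pure bookkeeping via Fubini and one-dimensional Gaussian integration by parts—no compactness or approximation is required—so I do not anticipate any real obstacle.
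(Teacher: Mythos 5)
The paper states this proposition without proof (it is imported from the literature on $BV$ functions in Wiener spaces), so there is no in-text argument to compare against; your derivation from the duality definition of the total variation is correct and essentially complete. Both directions are sound: the lifting $\Psi\mapsto\sum_{j\le m}\Psi_j(\Pi_m(\cdot))\,h_j$ gives $\totvar{u}\ge\totvarn{v}$, and your two reductions (killing the components orthogonal to $H_m$ by fiberwise Gaussian integration by parts against the constant $1$, then projecting the surviving components with $\bEm$ and using Jensen to preserve the pointwise constraint $\normH{\Phi}\le 1$) give the reverse inequality. Only two small points deserve an extra line. First, the ``tail'' $\sum_{j>m}\partial^*_{h_j}\phi_j$ is a priori an infinite sum, so it is cleaner to organize the first reduction on the finitely many generators of $\Phi\in\FCb(X,H)$: write $\Phi=\sum_k \Psi_k g_k$, split $g_k=g_k'+g_k''$ with $g_k'\in H_m$ and $g_k''\in H_m^\perp$, and observe that $\int_X u\,\partial^*_{g_k''}\Psi_k\,d\gga=0$ by your Fubini argument, since $\hat{g}_k''$ vanishes on $H_m$ and hence factors through $X_m^\perp$; this avoids interchanging an infinite sum with the integral. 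Second, the identity $\int_{X_m^\perp}\partial_{y_j}\phi_j(y,\cdot)\,d\gga_m^\perp=\partial_{y_j}(\bEm\phi_j)(y)$ (differentiation under the integral sign) should be recorded explicitly; it holds because $\phi_j$ is cylindrical, $\cC^1$ and bounded together with its gradient. With these remarks the equality of the two suprema, and hence the ``if and only if'' (the integrability $u\in L^1_\gga(X)\Leftrightarrow v\in L^1_{\gga_m}(\R^m)$ being immediate from Fubini), follows as you claim.
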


\begin{prop}[Coarea formula \cite{AFP}]\label{procoarea}
If $u\in BV_\gga(X)$ then for every borel set $B\subset X$, 
\begin{equation}\label{coarea}|D_\gga u|(B)=\int_{\R} \Pgamma{\{u>t\},B} \, dt.\end{equation}
\end{prop}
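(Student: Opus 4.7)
Both sides of \eqref{coarea} define finite Borel measures on $X$: the right-hand side is well defined by Fubini, once one notes that $t\mapsto P_\gga(\{u>t\},B)$ is Borel measurable. My plan is to show these two measures coincide by establishing (i) a one-sided inequality between them as measures, together with (ii) equality of their total masses $|D_\gga u|(X)=\int_\R P_\gga(\{u>t\})\,dt$. The two facts together force equality on every Borel set.

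\noindent\textbf{Easy inequality via layer-cake.} For a test field $\Phi\in\FCb(X,H)$ with $\normH{\Phi}\le 1$, applying \eqref{integpart} to the constant function $1$ yields $\int_X\dive_\gga\Phi\,d\gga=0$. Writing $u=\int_0^{+\infty}\chi_{\{u>t\}}\,dt-\int_{-\infty}^{0}(1-\chi_{\{u>t\}})\,dt$ and using Fubini, I obtain
\[
\int_X u\,\dive_\gga\Phi\,d\gga=\int_\R\int_X\chi_{\{u>t\}}\,\dive_\gga\Phi\,d\gga\,dt\le\int_\R \Pgamma{\{u>t\}}\,dt.
\]
Taking the supremum over admissible $\Phi$ gives the global bound. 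Replacing $\Phi$ by $\psi\Phi$ with a cylindrical cut-off $\psi$ supported in an open set $A\subset X$ localizes this to $|D_\gga u|(A)\le\int_\R \Pgamma{\{u>t\},A}\,dt$, and outer regularity of both measures extends the inequality to every Borel set.

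\noindent\textbf{Reverse global inequality.} By Theorem \ref{defBV}, pick $u_n\in H^1_\gga(X)$ with $u_n\to u$ in $L^1_\gga(X)$ and $\int_X\normH{\nabla_H u_n}\,d\gga\to|D_\gga u|(X)$. The coarea identity
\[
\int_X\normH{\nabla_H u_n}\,d\gga=\int_\R \Pgamma{\{u_n>t\}}\,dt
\]
for $H^1_\gga$-functions is obtained by applying the finite-dimensional Gaussian coarea formula in $\R^m$ (itself an immediate consequence of the classical Euclidean coarea, after absorbing the Gaussian density) to the cylindrical approximants $\bEm u_n$, then passing $m\to\infty$. Along a subsequence, $\chi_{\{u_n>t\}}\to\chi_{\{u>t\}}$ in $L^1_\gga$ for a.e. $t\in\R$; lower semicontinuity of $P_\gga$ under $L^1_\gga$-convergence (second item of Theorem \ref{defBV}) together with Fatou's lemma yields
\[
\int_\R \Pgamma{\{u>t\}}\,dt\le\liminf_n\int_\R \Pgamma{\{u_n>t\}}\,dt=|D_\gga u|(X).
\]
Combined with the easy inequality this gives equality on $X$, hence equality of the two measures on every Borel set.

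\noindent\textbf{Main obstacle.} The most delicate step is the coarea identity for $H^1_\gga$-functions via the cylindrical reduction: one needs $L^1_\gga$-convergence of $\nabla_H\bEm u_n$ to $\nabla_H u_n$ to transfer the equality from $\R^m$ to $X$, together with a Fatou-type argument for the level-set perimeters in the passage $m\to\infty$. These are standard properties of the conditional expectation $\bEm$, which commutes coordinatewise with $\nabla_H$ and is contractive in $L^p_\gga$; once settled, the rest of the argument is a routine combination of lower semicontinuity and Fubini--Fatou.
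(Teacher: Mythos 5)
The paper does not actually prove this proposition: it is imported from the literature (the infinite--dimensional statement is established in \cite{AMMP}, following the finite--dimensional scheme of \cite{AFP}), so your argument can only be measured against the standard one. Your overall strategy --- the layer--cake/duality bound for one inequality, approximation by $H^1_\gga(X)$ functions together with lower semicontinuity and Fatou for the reverse global bound, and then an upgrade from equality of total masses to equality of measures --- is the right one, and both global inequalities are essentially correct as sketched (with the caveat that your cylindrical approximation argument for $H^1_\gga$ functions only yields the inequality $\int_\R \Pgamma{\{v>t\}}\,dt\le\int_X\normH{\nabla_H v}\,d\gga$ rather than the identity you announce; fortunately that is the only direction your proof uses).

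The genuine gap is in the localization. In an infinite--dimensional Wiener space a nonzero cylindrical function $\psi=v\circ\Pi_m$ satisfies $\{\psi\neq0\}=\Pi_m^{-1}(\{v\neq0\})$, which contains the whole affine subspace $x+X_m^\perp$ through each of its points; consequently no nontrivial cylindrical cut--off is supported in a ball, nor in any bounded open set. The phrase ``a cylindrical cut--off $\psi$ supported in an open set $A$'' is therefore vacuous for most open $A$, the local inequality $|D_\gga u|(A)\le\int_\R \Pgamma{\{u>t\},A}\,dt$ is not established for general open sets, and outer regularity cannot then be invoked. The repair is to localize only on the algebra of Borel cylinders $C=\Pi_m^{-1}(B_m)$, where cut--offs $v_k\circ\Pi_m$ with $v_k\in C^\infty_c(\R^m)$ do exist: writing $\mu_1=|D_\gga u|$ and $\mu_2(B)=\int_\R \Pgamma{\{u>t\},B}\,dt$, one gets $\mu_1(C)\le\mu_2(C)$ for every such $C$; applying this also to $C^c$ (again a cylinder) and using the equality of total masses forces $\mu_1(C)=\mu_2(C)$ on the algebra of cylinders, and a monotone--class argument (for a separable Banach space the cylindrical $\sigma$-algebra coincides with the Borel $\sigma$-algebra) then gives equality on every Borel set. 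With this modification your proof goes through.
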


\noindent In Proposition \ref{ehrfunc}, we will need the following extension of Proposition \ref{procoarea}.

\begin{lem}\label{coairg}
For every function $u\in BV_\gga(X)$ and every non-negative Borel function $g$,
\begin{equation}\label{coairg2}
\int_X g(x) \,d|D_\gga u|(x)=\int_{\R}\left(\int_X g(x) \,d|D_\gga \chi_{E_t}|(x)\right)  dt
\end{equation} 
where $E_t:=\{u>t\}$.
\end{lem}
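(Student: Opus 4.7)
The plan is to establish the identity by the standard measure-theoretic ``monotone-class'' argument: start from the case $g = \chi_B$ given by Proposition \ref{procoarea}, extend to non-negative simple functions by linearity, and pass to general non-negative Borel $g$ by monotone convergence.

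More concretely, first I would observe that for $g = \chi_B$ with $B$ a Borel subset of $X$, equation \eqref{coairg2} is exactly the coarea formula \eqref{coarea}, since $\int_X \chi_B \, d|D_\gga u| = |D_\gga u|(B)$ and $\int_X \chi_B\, d|D_\gga \chi_{E_t}| = P_\gga(E_t, B)$. In particular, the case $g = \chi_B$ already implies that the map
\[
t \longmapsto \int_X \chi_B(x)\,d|D_\gga \chi_{E_t}|(x) = P_\gga(E_t,B)
\]
is Borel measurable on $\R$, since otherwise the right-hand side of \eqref{coarea} would not make sense. Next, for a non-negative simple function $g = \sum_{i=1}^N c_i \chi_{B_i}$ with $c_i \ge 0$ and $B_i$ Borel, the identity follows by linearity in $g$ of both sides, and measurability in $t$ is preserved by finite linear combinations.

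For a general non-negative Borel function $g$, I would choose an increasing sequence of non-negative simple Borel functions $g_n \uparrow g$ pointwise. Monotone convergence applied to the measure $|D_\gga u|$ gives
\[
\int_X g_n \, d|D_\gga u| \;\longrightarrow\; \int_X g \, d|D_\gga u|,
\]
while for each fixed $t \in \R$, monotone convergence applied to the measure $|D_\gga \chi_{E_t}|$ gives
\[
\int_X g_n \, d|D_\gga \chi_{E_t}| \;\longrightarrow\; \int_X g \, d|D_\gga \chi_{E_t}|.
\]
The latter convergence is monotone increasing in $n$ for each $t$, so the map $t \mapsto \int_X g\, d|D_\gga \chi_{E_t}|$ is a pointwise increasing limit of Borel measurable functions and is therefore Borel measurable; and a further application of monotone convergence in the $t$-integral yields
\[
\int_\R \!\! \int_X g_n \, d|D_\gga \chi_{E_t}|\, dt \;\longrightarrow\; \int_\R \!\!\int_X g \, d|D_\gga \chi_{E_t}|\, dt.
\]
Combining these three limits with the identity \eqref{coairg2} already proved for the simple functions $g_n$ gives the conclusion.

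There is no real obstacle here; the only points requiring minor attention are the measurability of $t \mapsto \int_X g\, d|D_\gga \chi_{E_t}|$, which as explained above is automatic from the monotone approximation, and the fact that $|D_\gga \chi_{E_t}|(X) = P_\gga(E_t) < \infty$ for $\mathcal{L}^1$-a.e.\ $t \in \R$ by Proposition \ref{procoarea}, which ensures that each integral $\int_X g_n\, d|D_\gga \chi_{E_t}|$ is well defined (and finite for simple $g_n$) on the relevant set of $t$'s.
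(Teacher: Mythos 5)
Your proof is correct and follows essentially the same route as the paper: both reduce \eqref{coairg2} to the coarea formula \eqref{coarea} for $g=\chi_B$ and then pass to general non-negative Borel $g$ by monotone convergence, the only cosmetic difference being that the paper uses the decomposition $g=\sum_{i\ge 1} i^{-1}\chi_{A_i}$ from Evans--Gariepy in place of your increasing sequence of simple functions. Your explicit treatment of the measurability of $t\mapsto \int_X g\,d|D_\gga \chi_{E_t}|$ is in fact slightly more careful than the paper's.
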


\begin{proof}
The proof of this lemma mimic the standard proof in the Euclidean case \cite[Th.2.2]{CF}.
By \cite[Ch.1,Th.7]{EG} we can write $g$ as
\[
g=\sum_{i=1}^{+\infty} \frac{1}{i}\, \chi^{}_{A_i}
\]
where the $A_i\subset X$ are Borel sets. Using the coarea formula \eqref{coarea}, we then get
\begin{align*}
\int_X g(x) d|D_\gga u|(x)&=\sum_{i=1}^{+\infty} \frac{1}{i} |D_\gga u|(A_i)\\
													&=	\sum_{i=1}^{+\infty} \frac{1}{i}\int_{\R} |D_\gga \chi_{E_t}|(A_i)\, dt\\
													&=\int_{\R} \left(\int_X \sum_{i=1}^{+\infty} \frac{1}{i} \chi_{A_i} d|D_\gga \chi_{E_t}|(x) \right) dt\\
													&=\int_{\R} \int_X g(x)\, d|D_\gga \chi_{E_t}|(x) \, dt.
													\end{align*}
\end{proof}

\noindent In \cite{AMMP} it is also shown that sets with finite Gaussian perimeter can be approximated by smooth cylindrical sets.
\begin{prop}\label{denscyl}
Let $E\subset X$ be a set of finite Gaussian perimeter then there exists smooth sets $E_m \subset \R^m$ such that $\Pi_m^{-1}(E_m)$ converges in $L^1_\gga(X)$ to $E$ and $\Pgamma{ \Pi_m^{-1}(E_m)}=P_{\gga_m}(E_m)$ converges to $\Pgamma{E}$ when $m$ tends to infinity.
\end{prop}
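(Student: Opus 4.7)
The plan is to combine the cylindrical conditional expectation $\bEm$ with the coarea formula and a finite-dimensional mollification. Let $u_m := \bEm \chi_E$, so that $u_m(x) = v_m(\Pi_m(x))$ for some Borel function $v_m : \R^m \to [0,1]$. Martingale convergence along the filtration generated by the projections $(\Pi_m)$ gives $u_m \to \chi_E$ strongly in $\Ldeu$, hence also in $L^1_\gga(X)$.

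The first key step is to establish the BV contraction
\[
\totvarn{v_m} \leq \Pgamma{E}.
\]
Given a smooth compactly supported $\Phi = (\phi_1, \dots, \phi_m): \R^m \to \R^m$ with $|\Phi|\leq 1$, I would lift it to $\tilde\Phi(x) := \sum_{i=1}^m \phi_i(\Pi_m(x))\, h_i \in \FCb(X,H)$, whose pointwise $H$-norm is at most $1$. A direct computation using $\tfrac{\partial}{\partial h_i}(\phi_j \circ \Pi_m) = (\partial_i\phi_j) \circ \Pi_m$ together with $\hath_i(x) = (\Pi_m(x))_i$ shows that $\dive_\gga \tilde\Phi = (\dive_{\gga_m}\Phi)\circ \Pi_m$. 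Since this lifted divergence is $\Pi_m$-measurable, the tower property of conditional expectation yields
\[
\int_{\R^m} v_m \, \dive_{\gga_m}\Phi \; d\gga_m = \int_X u_m \, \dive_\gga \tilde\Phi \; d\gga = \int_X \chi_E \, \dive_\gga \tilde\Phi \; d\gga \leq \Pgamma{E},
\]
and taking the supremum over $\Phi$ proves the claim.

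Next, the coarea formula of Proposition \ref{procoarea} applied to $v_m$ gives
\[
\int_0^1 P_{\gga_m}(\{v_m > t\})\, dt = \totvarn{v_m} \leq \Pgamma{E}.
\]
For every $t \in (0,1)$, on the symmetric difference $\Pi_m^{-1}(\{v_m>t\}) \triangle E$ one has $|u_m - \chi_E| \geq \min(t,1-t)$, so the convergence $u_m \to \chi_E$ in $L^1_\gga$ forces $\Pi_m^{-1}(\{v_m > t\}) \to E$ in $L^1_\gga(X)$. The lower semicontinuity of the Gaussian perimeter under $L^1_\gga$-convergence (immediate from its dual definition) then yields $\liminf_m P_{\gga_m}(\{v_m > t\}) \geq \Pgamma{E}$. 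A Fatou argument in the coarea identity above forces equality for almost every $t \in (0,1)$, so one may extract a subsequence $m_k$ and a threshold $t_* \in (0,1)$ along which $P_{\gga_{m_k}}(\{v_{m_k} > t_*\}) \to \Pgamma{E}$.

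Finally, in finite dimensions the set $F_k := \{v_{m_k} > t_*\} \subset \R^{m_k}$ has finite Gaussian perimeter and can be approximated in $L^1_{\gga_{m_k}}$ by a smooth open set $E_k$ with perimeter error $\leq 1/k$: mollify $\chi_{F_k}$, use the Euclidean coarea formula and Sard's theorem to select a regular level $s \in (0,1)$, and exploit smoothness and positivity of the Gaussian density on $\R^{m_k}$ to transfer $L^1$ and perimeter convergence from the Lebesgue setting to the Gaussian one. A diagonal extraction produces the desired sequence. I expect the main obstacle to be the contraction in Step~2: the Gaussian divergence carries the Ornstein--Uhlenbeck correction $-\hath_i\phi_i$, and one must verify that the chain rule for the cylindrical lift absorbs exactly this correction so that the bound $\totvarn{v_m} \leq \Pgamma{E}$ holds with no extra multiplicative constant, which is essential for the sharp perimeter convergence in Step~3.
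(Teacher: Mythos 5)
The paper does not actually prove this proposition but quotes it from \cite{AMMP}, and your argument is correct and essentially reproduces the standard proof given there: the lift $\tilde\Phi=\sum_i\phi_i(\Pi_m)h_i$ does absorb the Ornstein--Uhlenbeck correction exactly, because $\hath_i(x)=(\Pi_m(x))_i$ for an orthonormal frame, so $\dive_\gga\tilde\Phi=(\dive_{\gga_m}\Phi)\circ\Pi_m$ and the contraction $|D_{\gga_m}v_m|(\R^m)\le \Pgamma{E}$ holds with constant one, after which the coarea/Fatou selection of a good level $t_*$ together with lower semicontinuity and a finite-dimensional mollification (Sard) finishes the proof. The only cosmetic point is that your diagonal extraction yields sets indexed by a subsequence $m_k$, which you should extend to every $m$ by viewing $E_{m_k}\subset\R^{m_k}$ as the cylinder $E_{m_k}\times\R^{m-m_k}$ for $m_k\le m<m_{k+1}$, since both $\gga(\Pi_m^{-1}(\cdot)\,\Delta\, E)$ and $P_{\gga_m}$ are unchanged under this identification.
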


\noindent Note that, for half-spaces, the perimeter can be exactly computed \cite[Cor. 3.11]{AMMP}.
\begin{prop}\label{perhalf}
Let $h=R\hath \in H$ and $c\in \R$ then the half-space $$E=\{x \in X \; :\; \hath(x)\le c\}$$ has perimeter 
\[\Pgamma{E}=\frac{1}{\sqrt{2\pi}} \,e^{-\frac{c^2}{2\normH{h}^2}}.\]
\end{prop}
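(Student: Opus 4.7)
The plan is to reduce the claim to the elementary one-dimensional Gaussian identity $P_{\gga_1}((-\infty,a])=\frac{1}{\sqrt{2\pi}}e^{-a^2/2}$. The main subtlety is that a generic $\hat h\in\cH$ is only the $\Ldeu$-limit of elements of $R^*X^*$, so the cylindrical reduction cannot be applied directly to the half-space $E=\{\hat h\le c\}$.

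First I would dispose of the cylindrical case, in which $\hat h=R^*x^*$ for some $x^*\in X^*$. Up to replacing $x^*$ by $x^*/\normH{h}$ and $c$ by $c/\normH{h}$, one may assume $\normH{h}=1$ and choose $x^*$ as the first element of the orthonormal basis. Then $E=\Pi_1^{-1}((-\infty,c])$ is genuinely cylindrical, the cylindrical reduction proposition gives $\Pgamma{E}=P_{\gga_1}((-\infty,c])=\frac{1}{\sqrt{2\pi}}e^{-c^2/2}$, and undoing the normalization yields the stated formula.

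For a general $\hat h\in\cH$, I would approximate it by $\hat h_n:=R^*x_n^*\to\hat h$ in $\Ldeu$; the isometric identity $|R^*x_n^*-R^*x_m^*|_{\Ldeu}=\normH{Qx_n^*-Qx_m^*}$ forces $h_n:=Qx_n^*\to h$ in $H$, hence $\normH{h_n}\to\normH{h}$. The half-spaces $E_n:=\{\hat h_n\le c\}$ then converge to $E$ in $L^1_\gga$ (the Gaussian $\hat h$ has no atom at $c$), and the cylindrical step yields $\Pgamma{E_n}\to\frac{1}{\sqrt{2\pi}}e^{-c^2/(2\normH{h}^2)}$. Lower semicontinuity of the Gaussian perimeter immediately gives the upper bound $\Pgamma{E}\le\frac{1}{\sqrt{2\pi}}e^{-c^2/(2\normH{h}^2)}$.

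The matching lower bound is the main obstacle, since lower semicontinuity supplies the inequality in the wrong direction. To close the gap, I would test directly in the duality definition of the perimeter against the cylindrical fields
\[
\Phi_n:=\phi(\hat h_n)\,\frac{h_n}{\normH{h_n}}\in\FCb(X,H),\qquad \phi\in \cC^1_c(\R),\ |\phi|\le 1,
\]
which satisfy $\normH{\Phi_n}\le 1$. Computing $\dive_\gga\Phi_n$ through $\partial^*_{h_j}f=\partial_{h_j}f-\hath_jf$ and passing to the limit $n\to\infty$ in $\Ldeu$, the resulting integral over $E$ is evaluated by pushing forward under $\hat h/\normH{h}$ (whose law under $\gga$ is the standard Gaussian on $\R$), together with the one-variable integration by parts $se^{-s^2/2}=-(e^{-s^2/2})'$. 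This gives $\int_X\chi_E\dive_\gga\Phi_n\,d\gga\to\frac{1}{\sqrt{2\pi}}\phi(c)e^{-c^2/(2\normH{h}^2)}$, and letting $\phi(c)\to 1$ concludes the proof.
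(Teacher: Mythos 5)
Your proof is correct, but it cannot be ``essentially the same as the paper's'': the paper offers no proof of Proposition \ref{perhalf} at all and simply quotes it from \cite[Cor.~3.11]{AMMP}, so what you have written is a self-contained alternative. The structure is sound: the cylindrical case reduces, after normalizing $\normH{h}=1$ and completing $h$ to an orthonormal basis coming from $X^*$, to the one-dimensional identity $P_{\gga_1}((-\infty,c])=\frac{1}{\sqrt{2\pi}}e^{-c^2/2}$; the approximation $\hath_n=R^*x_n^*\to\hath$ in $\Ldeu$ (equivalently $h_n\to h$ in $H$) together with the absence of an atom of the Gaussian variable $\hath$ at level $c$ gives $L^1_\gga$-convergence of the half-spaces, and you are right that lower semicontinuity of $P_\gga$ only yields the upper bound. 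Your duality argument for the matching lower bound checks out: writing $e_n=h_n/\normH{h_n}$ one computes $\dive_\gga\Phi_n=\normH{h_n}\,\phi'(\hath_n)-\normH{h_n}^{-1}\hath_n\,\phi(\hath_n)$, this converges in $L^2_\gga$ to the analogous expression in $\hath$, and the push-forward under $\hath/\normH{h}$ turns $\int_E\dive_\gga\Phi_n\,d\gga$ in the limit into the integral of the exact derivative $\bigl(\phi(\normH{h}s)e^{-s^2/2}\bigr)'/\sqrt{2\pi}$ over $(-\infty,c/\normH{h}]$, which is $\frac{1}{\sqrt{2\pi}}\phi(c)e^{-c^2/(2\normH{h}^2)}$. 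Two remarks. First, within this paper the lower bound admits a one-line shortcut: the Gaussian isoperimetric inequality of Proposition \ref{priso} gives $\Pgamma{E}\ge\cU(\vgamma{E})$, and since $\vgamma{E}=\Phi(c/\normH{h})$ this is exactly the claimed value; your route avoids invoking that later-stated (and itself quoted) result, so it is the more elementary and self-contained one. Second, you should explicitly exclude the degenerate case $h=0$, for which $E$ is trivial and the stated formula does not make sense.
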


\section{The Ehrhard symmetrization}\label{rapehr}

The Ehrhard symmetrization has been introduced by Ehrhard in \cite{ehr2} for studying the isoperimetric inequality in a Gaussian setting. 
We recall the definition and the main properties of such symmetrization.

\begin{defin}
We define the functions $\Phi$  and $\alpha$ by 
\[\Phi(x)=\frac{1}{\sqrt{2\pi}}\int_{-\infty}^x e^{-\frac{t^2}{2}} \, dt \qquad \textrm{and} \qquad \alpha(x)=\Phi^{-1}(x).\]

\noindent we then let $\cU(x)=\Phi'\circ\alpha(x)=\frac{1}{\sqrt{2\pi}} e^{-\frac{\alpha^2(x)}{2}}$.
\end{defin}
\noindent Notice that $\Phi(t)$ is the volume of the half-space $\{\hath(x)<t\}$ and that $\cU(x)$ is the perimeter of a half-space of volume $x$.

\begin{lem}\label{lemfin}
Let $\hath_1,\,\hath_2\in\cH$, with $\normH{h_1}=\normH{h_2}=1$, and suppose that there exist $C_1,\,C_2\in\R$ such that 
\[\{\hath_1 <C_1 \}\subset \{\hath_2 < C_2\}.
\]
Then $\hath_1=\hath_2$.
\end{lem}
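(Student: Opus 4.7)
The plan is to push the containment forward to $\R^2$ via the joint map $(\hat h_1,\hat h_2)\colon X\to \R^2$. By the Bogachev proposition recalled in Section \ref{notation}, the image measure is a centered Gaussian on $\R^2$; since $\normH{h_i}=1$ yields $|\hat h_i|_{\Ldeu}=1$, the marginals are standard, and by the very definition of the inner product on $H$ the off-diagonal entry of the covariance is $\rho:=[h_1,h_2]_H=\sprod{\hat h_1}{\hat h_2}_{\Ldeu}$, with $|\rho|\le 1$ by Cauchy--Schwarz.

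The hypothesis then says that this centered Gaussian assigns zero mass to the half-strip $Q:=\{(x,y)\in\R^2: x<C_1,\ y\ge C_2\}$. I would split according to the value of $\rho$.

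If $|\rho|<1$, the covariance matrix $\bigl(\begin{smallmatrix}1&\rho\\ \rho&1\end{smallmatrix}\bigr)$ is non-degenerate, so the image density is strictly positive on all of $\R^2$; since $Q$ is non-empty and open (it contains, e.g., $(C_1-1,C_2+1)$), its Gaussian mass is positive, a contradiction. If $\rho=-1$, the equality case of Cauchy--Schwarz forces $h_2=-h_1$ and hence $\hat h_2=-\hat h_1$, so the inclusion reduces to $\{\hat h_1<C_1\}\subset\{\hat h_1>-C_2\}$; but the set $\{\hat h_1<\min(C_1,-C_2)\}$ is contained in the difference $\{\hat h_1<C_1\}\setminus\{\hat h_1>-C_2\}$ and carries positive standard Gaussian mass (since $\min(C_1,-C_2)$ is finite), contradicting the hypothesis. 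We conclude $\rho=1$, and the equality case of Cauchy--Schwarz in turn yields $h_1=h_2$, i.e.\ $\hat h_1=\hat h_2$ in $\cH$.

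The whole argument is essentially two-dimensional once the pushforward reduction is carried out; the only mildly delicate point is isolating and ruling out the degenerate anti-correlated case $\rho=-1$, where the joint density is supported on a line and one cannot simply invoke strict positivity of the density.
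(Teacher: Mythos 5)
Your argument is correct, and it shares the paper's overall strategy -- push the measure forward to $\R^2$ through $(\hath_1,\hath_2)$, identify the image as a centered Gaussian with standard marginals and correlation $\rho=[h_1,h_2]_H$, and derive a contradiction by showing that the ``forbidden'' set $\{\hath_1<C_1\}\cap\{\hath_2\ge C_2\}$ carries positive mass -- but it executes the key positivity step quite differently. The paper computes the Fourier transform of the pushforward to write down its density explicitly and then produces quantitative lower bounds on the mass in two regimes of the correlation ($-1\le\gl\le-\tfrac12$, handled by a direct orthogonal decomposition, and $-\tfrac12\le\gl\le 1-\tfrac\eta2$, handled by estimating the correlated density from below). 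You instead observe that for $|\rho|<1$ the covariance is non-degenerate, so the density is strictly positive and any set with non-empty interior has positive mass, and you dispose of the two degenerate cases $\rho=\pm1$ by the equality case of Cauchy--Schwarz ($\rho=-1$ reduces to a one-dimensional contradiction, $\rho=1$ gives $h_1=h_2$ directly since $\normH{h_1-h_2}^2=2-2\rho=0$). Your route is softer and shorter; the paper's yields explicit constants, which are not actually needed for this qualitative lemma. One cosmetic point: the set $Q=\{x<C_1,\ y\ge C_2\}$ is not open as written (because of the $\ge$), but it contains the non-empty open set $\{x<C_1,\ y>C_2\}$, which is all the strict-positivity argument requires.
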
  

\begin{proof}
\noindent Assume by contradiction $\hath_1 \neq \hath_2$, and let $\eta>0$ be such that $|\hath_1-\hath_2|_{\Ldeu} \ge \eta$. 
We shall bound from below by a positive constant the quantity
\[\vgamma{\left\{\hath_1(x)< C_1\right\}\cap\left\{\hath_2(x)\ge C_2\right\}}\]
thus contradicting the inclusion
\[\left\{\hath_1 <C_1 \right\}\subset \left\{\hath_2 < C_2\right\}.\]

\noindent  Letting $h$ be a unitary vector in $H$ orthogonal to $h_1$, we can write
\[h_2=\gl h_1+ \beta h\]
with $\gl^2+\beta^2=1$. 
Up to exchanging $h$ with $-h$,
we can also assume that $\beta\ge0$. We then have $\normH{h_1-h_2}=2(1-\gl)$ and thus $-1\le\gl\le 1-\frac{\eta}{2}$. 

\noindent Let us first suppose that $-1\le \gl\le -\frac 1 2$, then
\[
\left\{\hath_1(x)< \min\left(C_1,-\frac{C_2}{\gl}\right)\right\}\cap\left\{\hath(x)\ge 0\right\}\subset\left\{\hath_1(x)< C_1\right\}\cap\left\{\hath_2(x)\ge C_2\right\}.
\]

\noindent As $\hath_1$ and $\hath$ are orthogonal we have ${\Pi_{\hath_1,\hath}}\sharp \gga=\gga_2$ and thus
\begin{eqnarray*}
\vgamma{\left\{\hath_1(x)< \min(C_1,-\frac{C_2}{\gl})\right\}\cap\left\{\hath(x)\ge 0\right\}} &=& \frac{1}{2} \Phi(\min(C_1,-C_2/\gl))
\\
&\ge& \frac{1}{2}\Phi(\min(C_1,2C_2)).
\end{eqnarray*}
Hence, for $-1\le \gl\le -\frac{1}{2}$,
\[\vgamma{\left\{\hath_1(x)< C_1\right\}\cap\left\{\hath_2(x)\ge C_2\right\}}\ge\frac{1}{2}\,\Phi(\min(C_1,2C_2)).\]

\noindent If now $-\frac{1}{2}\le \gl \le 1-\frac{\eta}{2}$, we can assume that $\eta$ is such that $1-\frac{\eta}{2}\ge\frac{1}{2}$. Let us start by computing the Fourier transform of ${\Pi_{\hath_1,\hath_2}}\sharp \gga$. Denoting by $\tilde{\mu}$ 
the Fourier transform of a measure $\mu$ (see \cite[Sec. 1.2]{boga}) and letting $\Pi:=\Pi_{\hath_1,\hath_2}$, 
for every $(z_1,z_2)\in \R^2$ we have
\begin{align*}
\widetilde{\Pi\sharp \gga}(z_1,z_2)&=\int_{\R^2} e^{i z\cdot x} d \Pi\sharp\gga (x)\\
&=\int_X e^{iz\cdot \Pi(x)} d\gga(x)\\
&=\int_X e^{i [z_1 \hath_1(x)+z_2\hath_2(x)]}d\gga(x)\\
&=\int_X e^{i [(z_1+z_2\gl) \hath_1(x)+z_2\beta \hath(x)]}d\gga(x)\\
&=\int_{\R^2} e^{i [(z_1+z_2\gl)x_1+z_2\beta x_2]}d\gga_2(x_1,x_2)\\
&=\widetilde{\gga_2}(z_1+\gl z_2,\beta z_2)\\ 
&=e^{-\frac{1}{2} [(z_1+\gl z_2)^2+\beta^2 z_2^2]}\\
&=e^{-\frac{1}{2} [z_1^2+z_2^2+2\gl z_1 z_2]}.
\end{align*}
Thus, if we set $K:=\left(\begin{matrix} 1  & \gl\\ \gl &1\end{matrix}\right)$, we have $\widetilde{\Pi\sharp \gga}(z)=e^{-\frac{1}{2} z^t K z}$. It follows that ${\Pi\sharp \gga}$ is a centered Gaussian measure with density $\frac{1}{2\pi\sqrt{\det K}} e^{-\frac{1}{2} z^t K^{-1}z}$ and thus
\[\Pi\sharp\gga (z_1,z_2)=\frac{\sqrt{1-\gl^2}}{2\pi} e^{-\frac{1}{2}[z_1^2+z_2^2-2\gl z_1 z_2]} dz.\]
We now compute 
\begin{align*}\vgamma{\left\{\hath_1(x)< C_1\right\}\cap\left\{\hath_2(x)\ge C_2\right\}}&=\int_X \chi_{\{\hath_1(x)< C_1\}}(x) \chi_{\{\hath_2(x)\ge C_2\}}(x) \,d\gga(x)\\
&=\int_{\R^2} \chi_{\{z_1< C_1\}}(z) \chi_{\{z_2\ge C_2\}} (z) \,d\Pi\sharp \gga(z)\\
&=\int_{-\infty}^{C_1}\int_{C_2}^{+\infty} \frac{\sqrt{1-\gl^2}}{2\pi}\, e^{-\frac{1}{2}[z_1^2+z_2^2-2\gl z_1 z_2]} dz_1 dz_2\\
&\ge \frac{1}{2\pi}\sqrt{\frac{3}{4}}\int_{-\infty}^{C_1}\int_{C_2}^{+\infty}e^{-\frac{1}{2}z_1^2}e^{-\frac{1}{2}z_2^2} 
e^{\gl z_1 z_2} dz_1 dz_2.
\end{align*}
Finally, when $\gl z_1 z_2 \ge 0$, we can bound $e^{\gl z_1 z_2}$ from below by $1$, and when $\gl z_1 z_2 \le 0$ we can bound it form below by 
$e^{-\frac{1}{2} |z_1 z_2|}$ so that we can always bound from below
$$\vgamma{\{\hath_1(x)< C_1\}\cap\{\hath_2(x)\ge C_2\}}$$ by a positive constant. 
\end{proof}

\noindent We now define the Ehrhard symmetrization.
\begin{defin}
Let $E\subset X$ and let $m\in\mathbb N$. 
The Ehrhard symmetral of $E$ along the first $m$ variables is defined as (see Figure \ref{ehrhardsym}):
\[E^*:=\left\{\begin{array}{ll}
\left\{ (x,x_m,x_m^\perp)\in \R^{m-1}\times \R\times X_m^\perp \; : \; x_m<\alpha(\mathbb E_{m-1}\chi_E(x)) \right\}
& {\rm if\ }m>1
\\
\\
\left\{x\in X\; : \;\sprod{x_1^*}{x}< \alpha(\vgamma{E})\right\}
& {\rm if\ }m=1.
\end{array}\right.
\]
\end{defin}

\begin{figure}[ht]
\centering{\input{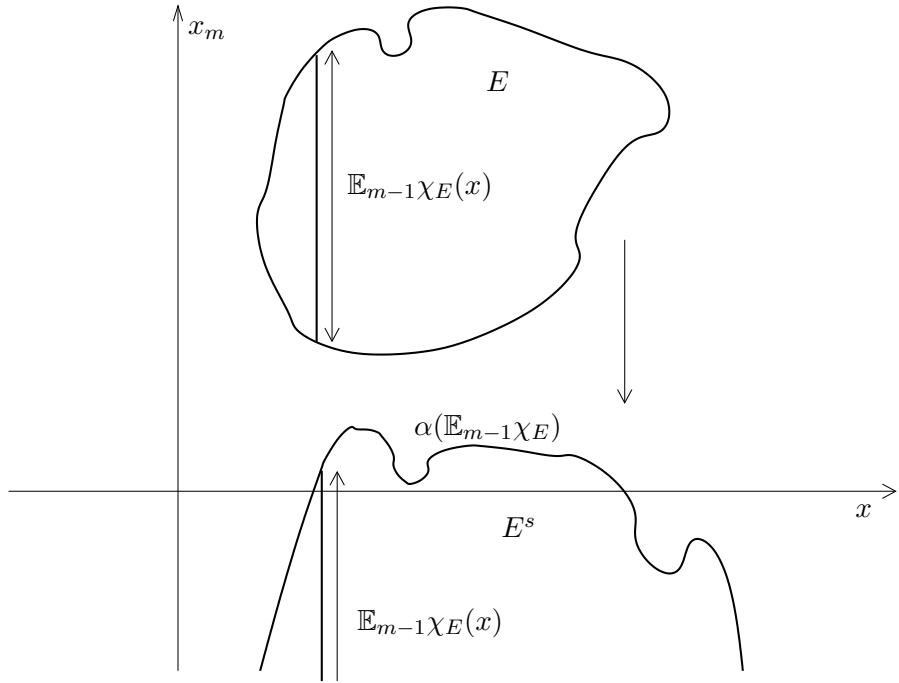}}

     
\caption{The Ehrhard symmetrization.}
\label{ehrhardsym}
\end{figure}

\noindent The interest of this symmetrization is that it decreases the Gaussian perimeter,
while keeping the volume fixed.

\begin{prop}\label{priso}
Let $E$ be a set of finite perimeter and $E^*$ be an Ehrhard symmetral of $E$,
then 
\begin{equation}\label{eqghe}
\gamma(E^*)=\gamma(E),
\end{equation} 
$\mathbb E_{m-1}\chi_{E^*}=\mathbb E_{m-1}\chi_E$ and
\begin{equation}\label{isoper}
\Pgamma{E^*} \le \Pgamma{E} .
\end{equation}
In particular, we have the isoperimetric inequality
\[
\Pgamma{E}\ge \cU(\vgamma{E}),
\]
with equality if and only if $E$ is a half-space. 
\end{prop}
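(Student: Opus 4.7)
The plan is to treat the three assertions about $E^*$ separately, and then to deduce the isoperimetric inequality as a corollary. First, the volume identity \eqref{eqghe} and the conditional expectation identity $\mathbb E_{m-1}\chi_{E^*}=\mathbb E_{m-1}\chi_E$ follow by direct computation. Using the product decomposition $\gga=\gga_{m-1}\otimes\gga_1\otimes\gga_m^\perp$ and writing $v(x):=\mathbb E_{m-1}\chi_E(x)$, Fubini gives
\[
\mathbb E_{m-1}\chi_{E^*}(x)=\int_{\R\times X_m^\perp}\chi_{\{x_m<\ga(v(x))\}}\,d(\gga_1\otimes\gga_m^\perp)=\Phi(\ga(v(x)))=v(x),
\]
which is both the conditional expectation identity and, after integration in $x\in\R^{m-1}$, the volume identity \eqref{eqghe}. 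The $m=1$ case is even more immediate, since $E^*$ is by definition the half-space $\{\hath_1<\ga(\gga(E))\}$ of volume $\Phi(\ga(\gga(E)))=\gga(E)$.

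For the perimeter inequality \eqref{isoper} the plan is to reduce to finite dimensions via Proposition \ref{denscyl}. Approximate $E$ by smooth cylindrical sets $F_j=\Pi_j^{-1}(E_j)$ with $E_j\subset\R^j$ (for $j\ge m$) so that $\chi_{F_j}\to\chi_E$ in $L^1_\gga$ and $P_\gga(F_j)\to P_\gga(E)$. Let $F_j^*=\Pi_j^{-1}(E_j^*)$, where $E_j^*\subset\R^j$ is the $m$-variable Ehrhard symmetral. The finite-dimensional inequality $P_{\gga_j}(E_j^*)\le P_{\gga_j}(E_j)$ is the classical Ehrhard symmetrization inequality: for each fixed $x\in\R^{m-1}$ one replaces the slice in the remaining coordinates by a Gaussian half-space in the $x_m$ direction of the same Gaussian measure, and the one-dimensional Gaussian isoperimetric inequality applied slicewise, together with the coarea formula in $\R^j$, yields the decrease of perimeter. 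Passing to the limit, the fact that both $F_j^*$ and $E^*$ are half-spaces in the $x_m$-direction with heights $\ga(\mathbb E_{m-1}\chi_{F_j})$ and $\ga(\mathbb E_{m-1}\chi_E)$, together with the convergence $\mathbb E_{m-1}\chi_{F_j}\to\mathbb E_{m-1}\chi_E$ in $L^1_{\gga_{m-1}}$ and the continuity of $\ga$, give $\chi_{F_j^*}\to\chi_{E^*}$ in $L^1_\gga$; the lower semicontinuity of $P_\gga$ under $L^1_\gga$ convergence (Theorem \ref{defBV}) then yields \eqref{isoper}.

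For the isoperimetric statement, take $m=1$: then $E^*=\{\hath_1<\ga(\gga(E))\}$ and Proposition \ref{perhalf} gives $P_\gga(E^*)=\frac{1}{\sqrt{2\pi}}e^{-\ga^2(\gga(E))/2}=\cU(\gga(E))$. Combined with \eqref{isoper} this yields $P_\gga(E)\ge\cU(\gga(E))$. For the equality case, when $P_\gga(E)=\cU(\gga(E))$ the chain of inequalities collapses, and in every orthonormal direction the Ehrhard symmetral $E^*$ has the same volume and perimeter as $E$; extracting rigidity slicewise from the one-dimensional Gaussian isoperimetric inequality produces two candidate half-spaces, one containing $E$ and one contained in $E$ modulo $\gga$-null sets, and Lemma \ref{lemfin} forces these two half-spaces to coincide, so $E$ itself is a half-space up to a $\gga$-negligible set.

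The main obstacle in this plan is the rigorous passage from the finite-dimensional Ehrhard symmetrization inequality to its infinite-dimensional analogue. The Ehrhard symmetrization does not commute straightforwardly with the cylindrical approximations $\mathbb E_j$, so the convergence $\chi_{F_j^*}\to\chi_{E^*}$ in $L^1_\gga$ must be extracted from the explicit half-space description of the symmetrals and the continuity of $\ga$ applied to the $L^1_{\gga_{m-1}}$-convergent sequence $\mathbb E_{m-1}\chi_{F_j}$, which requires care near the values $0$ and $1$ where $\ga$ is unbounded.
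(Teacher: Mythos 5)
The paper does not prove this proposition at all: it refers to \cite{bobkov,CK} for the finite--dimensional statements and to \cite{AMMP} for the infinite--dimensional extension. Your reconstruction of that extension is essentially the right architecture, and parts of it are correct and complete: the identities $\gga(E^*)=\gga(E)$ and $\mathbb E_{m-1}\chi_{E^*}=\mathbb E_{m-1}\chi_E$ do follow from the Fubini computation $\Phi(\ga(v))=v$; and the passage from $\R^j$ to $X$ via Proposition \ref{denscyl} and lower semicontinuity works. In fact the convergence $\chi_{F_j^*}\to\chi_{E^*}$ that worries you at the end is exact rather than delicate: for each $x\in\R^{m-1}$ the two fibres are nested half-lines, so $\gga(F_j^*\Delta E^*)=\int|\Phi(\ga(v_j))-\Phi(\ga(v))|\,d\gga_{m-1}=\|v_j-v\|_{L^1_{\gga_{m-1}}}$, and no continuity of $\ga$ near $0$ and $1$ is ever invoked.

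The genuine gaps are elsewhere. First, your justification of the finite--dimensional inequality $P_{\gga_j}(E_j^*)\le P_{\gga_j}(E_j)$ --- ``the one-dimensional Gaussian isoperimetric inequality applied slicewise, together with the coarea formula'' --- is not a proof. Slicewise isoperimetry controls only the vertical contribution to the perimeter; the symmetral's perimeter also carries the horizontal term coming from $\nabla\ga(v)$ (compare Proposition \ref{egalite}: $P_\gga(ES_m(v))=\int\sqrt{\cU(v)^2+|\nabla v|^2}$), and bounding the sum by $P(E_j)$ is precisely the content of Bobkov's functional inequality $\cU(\int f)\le\int\sqrt{\cU(f)^2+|\nabla f|^2}$ (equivalently Ehrhard's original argument in \cite{ehr2}). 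You should either cite this as a theorem or prove it; the slicing sketch would fail. Second, the equality case is asserted rather than proved: the ``two candidate half-spaces, one containing $E$ and one contained in $E$'' are never constructed, and no slicewise rigidity argument produces them. The characterization of equality in the Gaussian isoperimetric inequality is a hard theorem (Carlen--Kerce \cite{CK}, via the Ornstein--Uhlenbeck semigroup), and Lemma \ref{lemfin} is designed for the nested level sets $\{u>t\}$ of a single function in the proof of Proposition \ref{ehrfunc}, not for extracting a half-space from a single extremal set. As it stands, both the finite-dimensional inequality and the rigidity statement must be imported from the literature, exactly as the paper does.
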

\noindent For the proof we refer to \cite{bobkov,CK},
and to \cite{AMMP} for the extension to infinite dimensions.

\noindent We can also prove a stronger result which is a kind of Bernstein Theorem in this setting.

\begin{prop}\label{probern}
The half-spaces are the only local minimizers of the Gaussian perimeter with volume constraint.
\end{prop}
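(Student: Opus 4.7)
The strategy is to combine the Ehrhard symmetrization of Proposition~\ref{priso} with the local minimality assumption to force equality in the Gaussian isoperimetric inequality, so that the rigidity statement of that proposition concludes.

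Let $E$ be a local minimizer (with respect to the $L^1_\gga(X)$-topology) of $\Pgamma{\cdot}$ under the constraint $\vgamma{\cdot}=v$. The half-space $E^*:=\{x\in X:\sprod{x_1^*}{x}<\ga(v)\}$, namely the Ehrhard symmetral of $E$ with $m=1$, satisfies $\vgamma{E^*}=v$ and $\Pgamma{E^*}=\cU(v)\le\Pgamma{E}$ by Propositions~\ref{priso} and~\ref{perhalf}. It therefore suffices to prove the reverse inequality $\Pgamma{E}\le\Pgamma{E^*}$: equality will then hold in the isoperimetric inequality of Proposition~\ref{priso} and its rigidity statement will force $E$ to be a half-space, whose normal direction is then identified uniquely by Lemma~\ref{lemfin}.

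To pass from local minimality to this global inequality against $E^*$, I plan to interpolate between $E$ and $E^*$ by a continuous family $(E_t)_{t\in[0,1]}\subset L^1_\gga(X)$ with $E_0=E$, $E_1=E^*$, $\vgamma{E_t}=v$ for every $t$, and $t\mapsto\Pgamma{E_t}$ non-increasing. The natural candidate is the Gaussian analogue of Brock's continuous Steiner symmetrization: slicing $X$ along $h_1$, a measurable one-parameter family of flows on $\R$ continuously rearranges each slice of $E$ toward the common target half-line $(-\infty,\ga(v))$, in such a way that Fubini keeps $\vgamma{E_t}$ constant while Lemma~\ref{coairg}, combined with the one-dimensional Gaussian rearrangement inequality underlying Proposition~\ref{priso}, keeps $\Pgamma{E_t}$ non-increasing. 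Once such a family is in place, local minimality forces $\Pgamma{E_t}\ge\Pgamma{E}$ on an initial interval $[0,\delta]$, which together with the monotonicity gives $\Pgamma{E_t}\equiv\Pgamma{E}$ there; a standard open--closed continuation argument applied at $\sup\{t\in[0,1]:\Pgamma{E_s}=\Pgamma{E}\text{ for all }s\le t\}$ extends the equality to the whole of $[0,1]$, yielding $\Pgamma{E^*}=\Pgamma{E}$ as required.

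The main obstacle I anticipate is the construction and rigorous analysis of this continuous Gaussian symmetrization, in particular the monotonicity of $\Pgamma{E_t}$ under partial (rather than full) rearrangement in an infinite-dimensional ambient. A possible alternative sidestepping these technicalities would be to compute the first and second variations of $\Pgamma{\cdot}$ directly: their vanishing/non-negativity at a local minimizer translate into a constant Gaussian mean curvature equation together with stability of $\partial^*E$, after which a Bernstein-type rigidity for stable constant Gaussian mean curvature hypersurfaces (in the spirit of \cite{CK}), applied on the cylindrical approximations furnished by Proposition~\ref{denscyl} and then passed to the limit, would conclude.
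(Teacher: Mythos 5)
Your overall strategy (reduce to equality in the Gaussian isoperimetric inequality and invoke its rigidity) matches the paper's endgame, but the bridge from local minimality to the global inequality $\Pgamma{E}\le\cU(v)$ is where your plan has genuine gaps. First, the continuous Ehrhard symmetrization $(E_t)$ with $\vgamma{E_t}\equiv v$ and $t\mapsto\Pgamma{E_t}$ non-increasing is nowhere available in the paper's toolbox; in the Wiener setting its existence together with perimeter monotonicity is a substantial theorem in its own right, and you correctly flag it as the main obstacle --- which means the proof is missing precisely at its crucial step. Second, even granting the family, the continuation argument fails: local minimality only controls competitors that are admissible perturbations of $E$ (in your reading, $L^1$-close; in the paper's definition, sets $F$ with $E\gD F\subset B_R$), so it gives $\Pgamma{E_t}\ge\Pgamma{E}$ only for $t$ near $0$. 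From $\Pgamma{E_t}=\Pgamma{E}$ on $[0,\delta]$ you cannot propagate equality to $[0,1]$ by an open--closed argument, because for larger $t$ the set $E_t$ is no longer an admissible competitor against $E$, and $E_t$ itself is not known to be a local minimizer. What is actually needed there is a Brock-type rigidity statement (``if the continuous symmetrization does not strictly decrease the perimeter at $t=0^+$, then $E$ is already a half-space''), which is the hard content and is not supplied. Your second alternative (first and second variation plus a stability Bernstein theorem) is likewise only a sketch, and no such stability rigidity is proved or cited in a usable form.

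The paper's actual argument is much more elementary and exploits a feature specific to the Gaussian setting. Assuming $E$ is not a half-space, rigidity in Proposition \ref{priso} gives $\Pgamma{E}\ge\cU(v)+\eta$ for some $\eta>0$. One then builds the compactly supported competitor $F_R=\left(\{\sprod{x_1^*}{x}<\alpha_R\}\cap B_R\right)\cup\left(E\cap B_R^c\right)$, with $\alpha_R$ chosen so that the volume constraint holds, and estimates, via submodularity of the perimeter,
\[
\Pgamma{F_R}\le \Pgamma{\{\sprod{x_1^*}{x}<\alpha_R\}}+\Pgamma{B_R}+\Pgamma{E\cap B_R^c}\le \cU(v)+\eps(R),
\]
where $\eps(R)\to 0$ because $\Pgamma{B_R}$ and $\Pgamma{E\cap B_R^c}$ vanish as $R\to\infty$ (Gaussian concentration) and $\alpha_R\to\alpha(v)$. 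Thus compactly supported competitors already realize the global isoperimetric value, contradicting $\Pgamma{E}\ge\cU(v)+\eta$ directly, with no interpolation or second-variation analysis needed. This is the idea your proposal is missing: in Gauss space, local minimizers are automatically global minimizers.
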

\begin{proof}
Let $E\subset X $ be a local minimizer of the (Gaussian) perimeter and let $v=\vgamma{E}$. 
This means that, for every $R>0$ and every set $F$ of finite perimeter, with $\vgamma{F}=v$ and $E\gD F \subset B_R$ 
(where $B_R$ denotes the ball of radius $R$ centered at $0$), we have
\[\Pgamma{E}\le \Pgamma{F}.\]

\noindent If $E$ is not an half space then, by Proposition \ref{priso}, there exists $\eta>0$ such that 
\[\Pgamma{E}\ge \cU(v)+\eta.\]
Let $\alpha_R$ be such that
\[\vgamma{E\cap B_R}=\vgamma{\{\sprod{x_1^*}{x}<\alpha_R\}\cap B_R}.\]
We have that $\alpha_R$ tends to $\alpha(v)$ when $R$ goes to infinity and $\Pgamma{\{\sprod{x_1^*}{x}<\alpha_R\}}$ 
tends to $\Pgamma{\{\sprod{x_1^*}{x}<\alpha(v)\}}$. Letting 
\[F_R=\left(\{\sprod{x_1^*}{x}<\alpha_R\}\cap B_R\right)\cup \left(E\cap B_R^c\right)
\]
we get 
 \begin{align*}
 \cU(v)+\eta\le\Pgamma{E}\le\Pgamma{F_R}&\le \Pgamma{\{\sprod{x_1^*}{x}<\alpha_R\}\cap B_R}+\Pgamma{E\cap B_R^c}\\
 													&\le \Pgamma{\{\sprod{x_1^*}{x}<\alpha_R\}}+\Pgamma{B_R}+\Pgamma{E\cap B_R^c}\\
													&\le \Pgamma{\{\sprod{x_1^*}{x}<\alpha(v)\}}+ \eps(R)\\
													&=\cU(v)+\eps(R),
													\end{align*}
where we used various time the inequality (see \cite{giusti}) 
$$\Pgamma{E\cup F}+\Pgamma{E\cap F} \le \Pgamma{E}+\Pgamma{F}$$ 
and where $\eps(R)$ is a function which goes to zero when $R$ goes to infinity. We thus found a contradiction.													
\end{proof}

\begin{remarque} \rm
In the Euclidean setting, half-spaces are the only local minimizers of the perimeter only in dimension lower than 8 (see \cite{giusti}). Notice also that if we drop the volume constraint, half spaces are no longer local minimizers for the Gaussian perimeter,
since there are no nonempty local minimizers.
\end{remarque}

\noindent In the sequel we will also need another transformation which from a finite dimensional function gives an 
Ehrhard symmetric set whose sections have volume prescribed by the original function. More precisely:
\begin{defin}
Given a measurable function $v:\R^m\to [0,1]$, we define its Ehrhard set $ES_m(v)\subset X$ by
\[
ES_m(v):=\left\{(x,x_{m+1},x_{m+1}^\perp)\in \R^m\times \R \times X_{m+1}^\perp \; : \; x_{m+1}<\alpha(v(x))\right\}.
\]
Given a measurable cylindrical function $u:X\to [0,1]$
depending only on the first $m$ variables, that is, $u=v\circ\Pi_m$ for some $v:\R^m\to [0,1]$, we set
\[
ES_m(u):=ES_m(v).
\] 
\end{defin}

\noindent The link between Ehrhard sets and Ehrhard symmetrization is the following:

\begin{prop}
Let $E$ be a set of finite perimeter and $E^*$ be its Ehrhard symmetrization with respect to the first $(m+1)$ variables, then
\[E^*=ES_m(\bEm(\chi_E)).\]
\end{prop}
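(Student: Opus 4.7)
The claim is essentially a direct unpacking of definitions, so my plan is simply to match the two sets symbol-by-symbol and verify that the only nontrivial point (that $\bEm\chi_E$ is a legitimate input to $ES_m$) is automatic.

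First, I would rewrite the Ehrhard symmetrization along the first $(m+1)$ variables using the general definition with $m$ replaced by $m+1$. For $m+1 > 1$, the definition gives
\[
E^* = \left\{ (x, x_{m+1}, x_{m+1}^\perp) \in \R^{m} \times \R \times X_{m+1}^\perp \; :\; x_{m+1} < \alpha\bigl(\bEm\chi_E(x)\bigr)\right\},
\]
since the ``$m-1$'' in the general definition becomes $m$, and $\bEm[m]$ here is exactly our $\bEm$.

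Second, I would note that $\bEm\chi_E$ is, by definition of the cylindrical conditional expectation, a measurable function depending only on $\Pi_m$, with values in $[0,1]$ (it is the average of a $\{0,1\}$-valued function against the probability measure $\gga_m^\perp$). Thus it can legitimately be identified with a measurable function $v:\R^m\to[0,1]$, which is precisely the input required by the definition of $ES_m$.

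Finally, I would apply the definition of $ES_m$ to this $v = \bEm\chi_E$:
\[
ES_m(\bEm\chi_E) = \left\{ (x, x_{m+1}, x_{m+1}^\perp) \in \R^m \times \R \times X_{m+1}^\perp \; :\; x_{m+1} < \alpha\bigl(\bEm\chi_E(x)\bigr)\right\},
\]
which is term-for-term identical to the expression for $E^*$ obtained in the first step. This gives the equality $E^* = ES_m(\bEm\chi_E)$. There is no substantive obstacle here; the content of the proposition is really just the observation that the Ehrhard symmetrization along the first $(m+1)$ variables is the ``Ehrhard set'' built on top of the conditional volume function $\bEm\chi_E$, and this bookkeeping is immediate once one is careful to match the index shifts in the two definitions.
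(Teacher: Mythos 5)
Your proof is correct and is exactly the argument the paper intends: the paper states this proposition without proof, precisely because it follows immediately from matching the two definitions after the index shift $m\mapsto m+1$, which is what you carry out. Nothing is missing.
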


\noindent In the next proposition we compute the perimeter of Ehrhard sets. It slightly extends a result in \cite{CFMP}.
\begin{prop}\label{egalite}
Let $u\in BV_{\gamma_m}(\R^m)$ with $0\le u\le 1$, then 
\[
P_{\gamma}(ES_m(u))=\int_{\R^m} \sqrt{\cU(u)^2+|D_{\gamma_m} u|^2} \,d\gga_m
\]
where $$\int_{\R^m} \sqrt{\cU(u)^2+|D_{\gamma_m} u|^2} d\gga_m=\int_{\R^m} \sqrt{\cU(u)^2+|\nabla u|^2} \,d\gga_m+|D^s_\gga u|(X)$$ 
and $D_\gga u= \nabla u\, \gga + D^s_\gga u$
is the Radon-Nikodym decomposition of $D_\gga u$.
\end{prop}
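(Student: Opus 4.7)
The plan is to prove the identity by establishing matching lower and upper bounds for $\Pgamma{ES_m(u)}$. Denote the right-hand side by
\[
J(u):=\int_{\R^m}\sqrt{\cU(u)^2+|\nabla u|^2}\,d\gga_m+|D^s_\gga u|(\R^m),
\]
which is exactly the total variation on $\R^m$ of the $\R^{m+1}$-valued Radon measure $\nu:=(D_{\gga_m}u,\,\cU(u)\gga_m)$.

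For the lower bound $\Pgamma{ES_m(u)}\ge J(u)$, the idea is to test the Gaussian perimeter against vector fields on $\R^{m+1}$ of the special form $\Phi(x,t)=(\Psi(x),\phi(x))$, independent of the last variable $t$, with $\Psi\in\FCb(\R^m,\R^m)$, $\phi\in\FCb(\R^m)$, and $|(\Psi,\phi)|\le 1$ pointwise. Using Fubini together with the one-dimensional identities $\int_{-\infty}^{\alpha(u(x))}d\gga_1=u(x)$ and $\int_{-\infty}^{\alpha(u(x))}t\,d\gga_1=-\cU(u(x))$, followed by the integration by parts defining $D_{\gga_m}u$, a direct computation gives
\[
\int_{ES_m(u)}\dive_\gga\Phi\,d\gga_{m+1}=-\int_{\R^m}\Psi\cdot d D_{\gga_m}u+\int_{\R^m}\phi\,\cU(u)\,d\gga_m.
\]
Taking the supremum of the right-hand side over admissible $(\Psi,\phi)$ realizes the total variation of $\nu$, which by the Radon--Nikodym decomposition equals $J(u)$.

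For the matching upper bound, I first handle the smooth case: if $v\in C^1(\R^m)$ with $0<v<1$, then $\partial ES_m(v)$ is the smooth graph $\{x_{m+1}=\alpha(v(x))\}$ and the Gaussian area formula gives
\[
\Pgamma{ES_m(v)}=\int_{\R^m}\frac{e^{-\alpha(v)^2/2}}{\sqrt{2\pi}}\sqrt{1+|\nabla(\alpha\circ v)|^2}\,d\gga_m,
\]
which, using $e^{-\alpha(v)^2/2}/\sqrt{2\pi}=\cU(v)$ and $\nabla(\alpha\circ v)=\nabla v/\cU(v)$, simplifies exactly to $J(v)$. To extend to general $u\in BV_{\gga_m}(\R^m)$ with $0\le u\le 1$, I will produce a recovery sequence $\{u_n\}\subset C^1$, still taking values in $[0,1]$, such that $u_n\to u$ in $L^1_{\gga_m}$ and $J(u_n)=\int\sqrt{\cU(u_n)^2+|\nabla u_n|^2}\,d\gga_m\to J(u)$. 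Such a sequence exists by the classical Goffman--Serrin relaxation theorem applied to the convex integrand $f(s,\xi)=\sqrt{\cU(s)^2+|\xi|^2}$ (continuous, convex, of linear growth in $\xi$, with recession $f_\infty(s,\xi)=|\xi|$), carried over to the Gaussian-weighted setting. Since $u_n\to u$ a.e.~(along a subsequence) implies $\chi_{ES_m(u_n)}\to\chi_{ES_m(u)}$ in $L^1_{\gga_{m+1}}$ by dominated convergence, the lower semicontinuity of $\Pgamma{\cdot}$ yields
\[
\Pgamma{ES_m(u)}\le\liminf_n\Pgamma{ES_m(u_n)}=\lim_n J(u_n)=J(u).
\]

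The main technical obstacle is the construction of the smooth recovery sequence in the Gaussian-weighted $BV$ framework while preserving the range constraint $0\le u_n\le 1$: this requires adapting the usual mollification plus truncation scheme (e.g.\ Ornstein--Uhlenbeck or Euclidean convolution against the Gaussian density) and verifying that the truncation step does not increase $J$ in the limit. Once this adaptation is in place, the two inequalities combine to give the claimed identity.
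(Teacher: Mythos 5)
Your strategy is sound and genuinely different from the paper's. Your lower bound is the nicer of the two: by testing $\Pgamma{ES_m(u)}$ only against vector fields $(\Psi(x),\phi(x))$ independent of the last variable, and using $\int_{-\infty}^{\alpha(s)}d\gga_1=s$, $\int_{-\infty}^{\alpha(s)}t\,d\gga_1=-\cU(s)$, you reduce directly to the dual representation of the total variation of the $\R^{m+1}$-valued measure $(D_{\gga_m}u,\cU(u)\gga_m)$, whose absolutely continuous and singular parts give exactly $J(u)$. The paper instead gets this inequality by approximating $ES_m(u)$ with smooth Ehrhard-symmetric sets $E_n=ES_m(u_n)$ with $\Pgamma{E_n}\to\Pgamma{ES_m(u)}$ and then invoking the $L^1$-lower semicontinuity of $J$ (its Proposition 4.6, proved via the Demengel--Temam duality Lemma 4.5 -- which is essentially the same duality you use, but established separately). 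For the upper bound the two routes diverge more seriously: the paper works directly on the set $\widetilde E=\Pi_{m+1}(ES_m(u))$, using Vol'pert's theorem and Lemmas 4.4--4.6 of \cite{CFMP} to split $P_{\gga_{m+1}}(\widetilde E)$ over the good set $B$ (where $u$ is approximately differentiable and the vertical component of the normal is nonzero) and its complement, landing exactly on $J(u)$. You instead propose: smooth case by the area formula (this is \cite[Th.~4.3]{CFMP}, which the paper also uses as its starting point), then a recovery sequence $u_n\in C^1$ with $J(u_n)\to J(u)$, then lower semicontinuity of the perimeter together with $\gga(ES_m(u_n)\Delta ES_m(u))=\|u_n-u\|_{L^1_{\gga_m}}$.

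The one place where your argument is not yet a proof is precisely the step you flag: the existence of the $J$-strict recovery sequence. Goffman--Serrin/Anzellotti--Giaquinta relaxation is classically stated for the Lebesgue measure on bounded domains; on all of $\R^m$ with Gaussian weight you must actually construct $u_n$ and prove $J(u_n)\to J(u)$, and plain Euclidean mollification is delicate here because $\gga_m\ast\rho_\eps$ is not dominated by $C\gga_m$, so convergence of the weighted total variations does not come for free. The natural fix in this setting is to regularize with the Ornstein--Uhlenbeck semigroup, for which $|D_{\gga_m}T_tu|(\R^m)\to|D_{\gga_m}u|(\R^m)$ is known (see \cite{AMMP}), combined with a Reshetnyak-type continuity argument for the integrand $\sqrt{\cU(s)^2+|\xi|^2}$ (whose recession function $|\xi|$ is independent of $s$, so no extra jump term appears); your truncation step is harmless since the integrand vanishes where $u_n$ is cut at $0$ or $1$. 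Note that the paper's Vol'pert route exists exactly to avoid having to produce such a recovery sequence, since a sequence with $\Pgamma{E_n}\to\Pgamma{E}$ need not satisfy $J(u_n)\to J(u)$ before the identity is known. So: correct architecture, genuinely different proof of both inequalities, with one substantive lemma (strict smooth approximation of $J$ in the Gaussian-weighted setting) still to be written out.
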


\begin{proof}
By \cite[Th. 4.3]{CFMP} the result holds for $u\in H^1_{\gga_m}(\R^m)$. 
We will show by approximation that the same holds for $u\in BV_{\gamma_m}(\R^m)$.

\noindent Let $E=ES_m(u)$, then we can find sets $E_n$ such that 
$\gamma(E_n\Delta E)\to 0$ and 
$P_{\gga}(E_n)\to P_{\gga}(E)$ as $n\to +\infty$,
and all the $E_n$ have smooth boundary and are Ehrhard symmetric.  
Thus, for every $n\in\mathbb N$, there exists a smooth function $u_n$ such that $0\le u_n\le 1$, $ E_n=ES_m (u_n)$, 
$u_n\to u$ in $L^1_{\gamma_{m}}(\R^{m})$, and  
\[
P_{\gga}(E_n)=\int_{\R^m} \sqrt{\cU(u_n)^2+|D_{\gamma_m} u_n|^2}\, d\gga_m.
\]

\noindent Since, by Proposition \ref{dual}, the functional $\int_{\R^m} \sqrt{\cU(u)^2+|D_{\gamma_m} u|^2} d\gga_m$ is lower semicontinuous 
in $L^1_{\gamma_{m}}(\R^{m})$, we get
\begin{eqnarray*}
P_{\gga}(E) &=& \lim_{n\to\infty}P_{\gga}(E_n)
\\
&=& \lim_{n\to\infty} \int_{\R^m} \sqrt{\cU(u_n)^2+|D_{\gamma_m} u_n|^2}\, d\gga_m 
\\
&\ge& \int_{\R^m} \sqrt{\cU(u)^2+|D_{\gamma_m} u|^2}\, d\gga_m.
\end{eqnarray*}

\noindent The other inequality follows as in \cite{CFMP}. Let $\widetilde E=\Pi_{m+1}(E)\subset\R^{m+1}$ 
and observe that $\gamma_{m+1}(\widetilde E)=\gamma(E)$ and $P_{\gga_{m+1}}(\widetilde E)=P_\gamma(E)$.
 By Vol'pert Theorem 
\cite[Th. 3.108]{AFP} there exists a set $B\subset \R^m$ 
such that for every $x\in B$, $\nu^{\widetilde E}_{m+1}(x,\alpha(u_E(x)))$ 
exists and is not equal to zero, where $\nu^{\widetilde E}_{m+1}$ denotes the 
last coordinate of the unit external normal to $\partial^* \widetilde E$. 
By \cite[Lemma 4.4]{CFMP}, $\gga_m$-almost every  $x\in B$ 
is a point of approximate differentiability for $u$.  By Lemma 4.5 and 4.6 of \cite{CFMP} we then have
\begin{align*}
P_{\gga_{m+1}}(\widetilde E)&=P_{\gga_{m+1}}(\widetilde E,B\times \R)+P_{\gga_{m+1}}(\widetilde E,B^c\times \R)\\
								&\le \int_{B} \sqrt{\cU(u)^2+|\nabla u|^2}d\gga_m + \int_{B^c} |D_{\gga_m} u| +\int_{B^c} \cU(u)\, d\gga_m\,.
								\end{align*}
As $\gga_m(B^c)=0$, we find that
\[\int_{B} \sqrt{\cU(u)^2+|\nabla u|^2}d\gga_m + \int_{B^c} |D_{\gga_m} u|=
\int_{\R^m} \sqrt{\cU^2(u)+|\nabla u|^2} d\gga_m + |D^s_{\gga_m} u|(\R^m)\]
and thus $P_\gamma(E)=P_{\gga_{m+1}}(\widetilde E) \le \int_{\R^m} \sqrt{\cU(u)^2+|D_{\gamma_m} u|^2} d\gga_m$.
\end{proof}

\noindent The last transformation that we consider is the analog of the Schwarz symmetrization in the Gaussian setting,
and was first introduced by Ehrhard in \cite{ehr}.

\begin{defin}
Let $u\in X\to\R$ be a measurable function and let $m \in \N$ be fixed. We define the $m$-dimensional Ehrhard symmetrization $u^*$ of $u$ 
as follows:
\begin{itemize}
\item for all $t\in\R$ we let $E_t^*$ be the Ehrhard symmetrization of $E_t:=\{u>t\}$ with respect to the first $m$ variables;
\item we let $u^*(x):=\inf \{ t \; :\; x\in E_t^*\}$.
\end{itemize}
\end{defin}

\noindent As \eqref{eqghe} implies $\gamma(\{u^*>t\})=\gamma(\{u>t\})$ for all $t\in\R$, 
from the Layer Cake formula it follows that, if $u\in L^2_\gamma(X)$, 
then $u^*\in L^2_\gamma(X)$ and
\begin{equation}\label{layer}
\int_X |u^*|^2 d\gga=\int_X |u|^2\,d\gga\,.
\end{equation}
Indeed, we have
\begin{eqnarray*}
\int_X |u|^2 d\gga &=& 2\int_0^{+\infty} t \, \gga ( \{u >t\})\, dt 
- 2 \int^0_{-\infty} t \, \gga ( \{u <t\})\, dt 
\\
&=& 
2\int_0^{+\infty} t \, \gga ( \{u^* >t\})\, dt
- 2 \int^0_{-\infty} t \, \gga ( \{u^* <t\})\, dt
\\
&=&\int_X |u^*|^2 d\gga.
\end{eqnarray*}

\begin{lem}\label{lemlam}
Let $u,v:X\to [0,+\infty)$ belonging to $L^2_\gamma(X)$, 
then
\begin{equation}\label{diffstar}
\|u^*-v^*\|_{L^2_\gamma(X)} \le \|u-v\|_{L^2_\gamma(X)}.
\end{equation}
\end{lem}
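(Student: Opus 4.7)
The plan is to reduce \eqref{diffstar} to a Hardy--Littlewood type inequality for the Ehrhard symmetrization, and then verify it using the explicit cylindrical structure of the symmetral.

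First I would use that $u,v\ge 0$ lie in $L^2_\gga(X)$, so by \eqref{layer} the rearrangements $u^*,v^*$ also lie in $L^2_\gga(X)$ with $\|u^*\|_{L^2_\gga(X)}=\|u\|_{L^2_\gga(X)}$ and $\|v^*\|_{L^2_\gga(X)}=\|v\|_{L^2_\gga(X)}$. Expanding the squared differences, the desired estimate \eqref{diffstar} is equivalent to
\[
\int_X u^*v^*\,d\gga\ \ge\ \int_X uv\,d\gga.
\]

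Next, applying the layer-cake identity $f=\int_0^{\infty}\chi_{\{f>s\}}\,ds$ to $u,v,u^*,v^*$ and using Fubini,
\[
\int_X uv\,d\gga=\int_0^{\infty}\!\int_0^{\infty}\gga\big(\{u>s\}\cap\{v>t\}\big)\,ds\,dt,
\]
and the same identity for $u^*v^*$ (with $\{u^*>s\}=E_s^*$, $\{v^*>t\}=F_t^*$ by definition). So it is enough to prove the set inequality
\[
\gga(E^*\cap F^*)\ \ge\ \gga(E\cap F)\qquad\text{for every measurable } E,F\subset X.
\]

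For $m=1$ both $E^*$ and $F^*$ are half-spaces with the same unit normal and hence nested, so $\gga(E^*\cap F^*)=\min(\gga(E),\gga(F))\ge\gga(E\cap F)$. For $m>1$ both $E^*$ and $F^*$ are cylinders over $X_m^\perp$, and $\Phi\circ\alpha=\mathrm{id}$ together with the monotonicity of $\Phi$ give, by Fubini,
\[
\gga(E^*\cap F^*)=\int_{\R^{m-1}}\min\!\big(\mathbb{E}_{m-1}\chi_E(x),\mathbb{E}_{m-1}\chi_F(x)\big)\,d\gga_{m-1}(x).
\]
On the other hand, since $\chi_{E\cap F}=\chi_E\chi_F\le\min(\chi_E,\chi_F)$ pointwise, taking conditional expectations and integrating yields
\[
\gga(E\cap F)=\int_{\R^{m-1}}\mathbb{E}_{m-1}\chi_{E\cap F}\,d\gga_{m-1}\le\int_{\R^{m-1}}\min\!\big(\mathbb{E}_{m-1}\chi_E,\mathbb{E}_{m-1}\chi_F\big)\,d\gga_{m-1},
\]
and the set inequality follows.

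The main obstacle, if any, is just bookkeeping: keeping straight the disintegration $\gga=\gga_{m-1}\otimes(\gga_1\otimes\gga_m^\perp)$ and the fact that the Ehrhard symmetral is a cylinder over $X_m^\perp$ whose $(m-1)$-slices are half-lines. The substantive inputs --- the conservation of mass \eqref{layer} and the explicit form of $E^*$ --- are already available from the section.
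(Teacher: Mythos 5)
Your proof is correct and follows essentially the same route as the paper: reduce \eqref{diffstar} via \eqref{layer} to the Hardy--Littlewood-type inequality $\int_X uv\,d\gga\le\int_X u^*v^*\,d\gga$, pass to level sets by the Layer Cake formula, and verify $\vgamma{E\cap F}\le\vgamma{E^*\cap F^*}$ fiber by fiber using that the symmetrized slices are nested half-lines whose intersection has measure $\min(\mathbb{E}_{m-1}\chi_E,\mathbb{E}_{m-1}\chi_F)\ge\mathbb{E}_{m-1}\chi_{E\cap F}$. The only difference is cosmetic (you write the fiberwise intersection as a minimum while the paper argues by a WLOG case distinction), so nothing further is needed.
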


\begin{proof}
\noindent The proof is a straightforward adaptation of the 
analogous proof for the Schwarz symmetrization \cite[Th. 3.4]{liebloss}. 

\noindent Recalling \eqref{layer} with $p=2$, we have only to show that 
\begin{equation}\label{uv}\int_X u v d \gga \le \int_X u^* v^* d \gga.\end{equation}

\noindent Again by the Layer Cake formula we have
\[\int_X u v d \gga=\int_0^{+\infty} \int_0^{+\infty} \int_X \chi_{\{u>t\}}(x) \chi_{\{v>s\}}(x) d\gga(x) \, dt \, ds.\]

\noindent Thus \eqref{uv} would follow from the same inequality for sets, that is, 
\begin{equation}\label{diffset}
\vgamma{A\cap B} \le \vgamma{A^* \cap B^*}. 
\end{equation}
Let $x_m \in \R^m$ and assume that 
\[\int_{X_m^\perp} \chi_A (x_m+y) d\gga_m^\perp(y) \ge \int_{X_m^\perp} \chi_B (x_m+y) d\gga_m^\perp(y)\]
then by definition of the Ehrhard symmetrization we have $$B^* \cap (x_m +X_m^\perp) \subset A^* \cap (x_m +X_m^\perp)$$ and therefore
\begin{align*}\int_{X_m^\perp} \chi_{A^*} (x_m+y)\chi_{B^*}(x_m+y) d\gga_m^\perp(y)&=\int_{X_m^\perp} \chi_{A^*} (x_m+y) d\gga_m^\perp(y)\\
&=\int_{X_m^\perp} \chi_{A} (x_m+y) d\gga_m^\perp(y)\\
&\ge \int_{X_m^\perp} \chi_{A} (x_m+y)\chi_{B}(x_m+y) d\gga_m^\perp(y)
\end{align*}
This inequality also holds if $\int_{X_m^\perp} \chi_B (x_m+y) d\gga_m^\perp(y) \ge \int_{X_m^\perp} \chi_A (x_m+y) d\gga_m^\perp(y)$ so that finally
\begin{align*}
\vgamma{A^* \cap B^*}&=\int_{X_m} \int_{X_m^\perp}  \chi_{A^*} (x+y)\chi_{B^*}(x+y) d\gga_m^\perp(y) d\gga_m(x)\\
& \ge \int_{X_m}  \int_{X_m^\perp} \chi_{A} (x+y)\chi_{B}(x+y) d\gga_m^\perp(y) d\gga_m(x)\\
&=\vgamma{A\cap B}\end{align*}
which gives \eqref{diffset}.
\end{proof}

\noindent As for the Schwarz symmetrization, a P\'olya-Szeg\"o principle holds for the Ehrhard symmetrization.

\begin{prop}\label{ehrfunc}
Let $u\in H^{1}_\gga(X)$, let $m \in \N$ and let $u^*$ be the 
$m$-dimensional Ehrhard symmetrization of $u$.
Then $u^*\in H^{1}_{\gga}$ and 
\begin{equation}\label{polya} 
\int_{X} \normH{\nabla_H u^*}^2 \,d\gga \le \int_X \normH{\nabla_H u}^2 \,d\gga.
\end{equation}
Moreover, if $m=1$ and equality holds in \eqref{polya}, then 
$$
u=\tilde{u}\left(\hath(x)\right) \qquad {\rm for\ some\ } \hath \in \cH\,,
$$ 
and $\hath$ can be chosen to be a unitary vector.
\end{prop}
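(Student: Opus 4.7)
The plan is first to reduce to finite dimensions by density of smooth cylindrical functions, then to establish the Pólya--Szegő inequality via a coarea / Cauchy--Schwarz argument combined with the Gaussian isoperimetric inequality (Proposition \ref{priso}), and finally to use the rigidity of that isoperimetric inequality together with Lemma \ref{lemfin} to handle the equality case when $m=1$.

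By density of smooth cylindrical functions in $H^1_\gga(X)$ together with the $L^2$-continuity of the rearrangement (Lemma \ref{lemlam}), it suffices to prove \eqref{polya} when $u=v\circ\Pi_n$ for some smooth bounded $v:\R^n\to\R$ with $n\ge m$; the symmetral $u^*$ is then cylindrical in the first $n$ coordinates, so the inequality becomes a statement in $(\R^n,\gga_n)$. Setting $\mu(t):=\vgamma{\{u>t\}}$, Proposition \ref{priso} gives $\vgamma{\{u^*>t\}}=\mu(t)$ and $\Pgamma{\{u^*>t\}}\le\Pgamma{\{u>t\}}$. Applying the coarea formula twice, with test weights $\normH{\nabla_H u}$ and $\normH{\nabla_H u}^{-1}$ respectively, followed by Cauchy--Schwarz on each level set $\{u=t\}$, yields
\[
\int_X \normH{\nabla_H u}^2\,d\gga \;\ge\; \int_\R \frac{\Pgamma{\{u>t\}}^2}{-\mu'(t)}\,dt,
\]
where $-\mu'(t)=\int_{\{u=t\}}\normH{\nabla_H u}^{-1}\,d\tilde\sigma_t$ by coarea. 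For the symmetrized $u^*$ the corresponding chain becomes an equality: when $m=1$ the level sets of $u^*$ are hyperplanes orthogonal to $h_1$ and $\normH{\nabla_H u^*}$ is literally constant on each of them; when $m>1$ one verifies the matching identity from the Ehrhard-set structure of $\{u^*>t\}$ via Proposition \ref{egalite}. Combining the two estimates with $\Pgamma{\{u^*>t\}}\le\Pgamma{\{u>t\}}$ yields \eqref{polya}.

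Assume now $m=1$ and that equality holds in \eqref{polya}. Tracing back the chain forces the isoperimetric equality $\Pgamma{\{u>t\}}=\cU(\mu(t))$ for a.e.\ $t\in\R$. By the rigidity part of Proposition \ref{priso}, each superlevel set $\{u>t\}$ must be a half-space $\{\hath_t(x)>c_t\}$ with $\normH{h_t}=1$. Since $\{u>t_2\}\subset\{u>t_1\}$ whenever $t_1<t_2$, Lemma \ref{lemfin}, applied to the complementary half-spaces, forces $\hath_{t_1}=\hath_{t_2}$; hence there is a single unit vector $\hath\in\cH$ such that $\{u>t\}=\{\hath>c_t\}$ for a.e.\ $t$. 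Equivalently, $u(x)=\tilde u(\hath(x))$ for the monotone function $\tilde u(s):=\sup\{t:c_t<s\}$, which is the stated representation.

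The main obstacle will be showing that the coarea/Cauchy--Schwarz chain is tight for $u^*$ in the case $m>1$, since in that case the level sets of $u^*$ are not hyperplanes and $\normH{\nabla_H u^*}$ is not constant on them, so Cauchy--Schwarz on the level sets is no longer automatically an equality. The workaround is Proposition \ref{egalite}: it gives the explicit perimeter identity $\Pgamma{ES_{m-1}(w)}=\int\sqrt{\cU(w)^2+|\nabla w|^2}\,d\gga_{m-1}$ for Ehrhard sets, which lets us compare the two sides slice by slice in the first $m$ coordinates and identify the resulting quantity as precisely $\int_X\normH{\nabla_H u^*}^2\,d\gga$, recovering the matching equality.
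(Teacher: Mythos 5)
Your reduction to finite dimensions via Lemma \ref{lemlam} and lower semicontinuity, your coarea/Cauchy--Schwarz chain, and your treatment of the equality case (isoperimetric rigidity for a.e.\ level set, then Lemma \ref{lemfin} applied to the nested half-spaces to fix a single direction $\hath$) all match the paper's argument. The difference is that the paper only runs the Brothers--Ziemer chain for $m=1$, where it is genuinely tight, and proves the inequality \eqref{polya} for general $m$ by a completely separate route: it quotes Ehrhard's finite-dimensional P\'olya--Szeg\"o theorem \cite[Th.\ 3.1]{ehr} for Lipschitz functions and then passes to the limit. You instead try to make the chain work for all $m$, and you correctly identify the obstruction yourself: for $m>1$ the level sets $\{u^*=t\}$ are graphs $\{x_m=\alpha(\E_{m-1}\chi_{E_t})\}$ rather than hyperplanes, $\normH{\nabla_H u^*}$ is not constant on them, and Cauchy--Schwarz is strict.

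The proposed repair via Proposition \ref{egalite} does not close this gap. That proposition computes the \emph{perimeter} of an Ehrhard set, i.e.\ the $L^1$-type quantity $\int\sqrt{\cU(w)^2+|D w|^2}\,d\gga_{m-1}$; it says nothing about the Dirichlet energy $\int\normH{\nabla_H u^*}^2\,d\gga$, and no slice-by-slice manipulation of it yields the identity $\int_X\normH{\nabla_H u^*}^2\,d\gga=\int_\R P_\gga(\{u^*>t\})^2/(-\mu'(t))\,dt$ that your argument needs. Indeed that identity is false in general for $m>1$: applying your own chain to $u^*$ gives $\int\normH{\nabla_H u^*}^2\ge\int_\R P_\gga(\{u^*>t\})^2/(-\mu'(t))\,dt$, which points in the same direction as the bound for $u$, so the two estimates cannot be combined into \eqref{polya}. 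As written, your proof therefore only establishes the proposition for $m=1$ (where everything you say is correct and coincides with the paper). To cover $m>1$ you need an independent proof of the finite-dimensional inequality --- the citation of \cite[Th.\ 3.1]{ehr}, as the paper does, or a tensorization/semigroup argument --- rather than the level-set chain.
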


\begin{proof}

\noindent 
In \cite[Th. 3.1]{ehr}, inequality \eqref{polya} is proven for Lipschitz functions, in finite dimensions. 
We extend it by approximation to Sobolev functions. 

\noindent We can assume $u\ge 0$, since we have 
$(u^\pm)^*=(u^*)^\pm$,
where $u^\pm, (u^*)^\pm$ denote the positive and negative part of $u$ and $u^*$, respectively.

\noindent Let $u_n \in \FCc(X)$ be positive functions converging to $u$ in $H^1_\gga(X)$, 
then by \eqref{diffstar}, $u_n^*$ converges to $u^*$ in $\Ldeu$ and thus by the lower semicontinuity of the $H^1_\gga(X)$ norm we have
\[\int_X \normH{\nabla_H u^*}^2\le \varliminf_{n\to \infty} \int_X \normH{\nabla_H u_n^*}^2\le\varliminf_{n\to \infty} \int_X \normH{\nabla_H u_n}^2= \int_X \normH{\nabla_H u}^2.\]

\smallskip

\noindent We now turn to the equality case for one-dimensional symmetrizations. For this we closely follow \cite{CK} and
 give an alternative proof of \eqref{polya}, based on ideas of Brothers and Ziemer \cite{BZ} for the Schwarz symmetrization.

\noindent Let $u\in H^1_{\gga}(X)$ and $\mu(t)=\gga( \{u>t\})=\gga( \{u^*>t\})$. 
By the coarea  formula \eqref{coairg2}, for all $t\in\R$ we have
\[\mu(t)= \gga(\{u>t\} \cap \{\nabla_H u=0\})+\int_t^{+\infty} \left(\int_{\{\nabla_H u\ne 0\}} \frac{1}{\normH{\nabla_H u}}\;  d|D_\gga \chi_{E_\tau}| \right) d\tau.\] 
Hence
\begin{equation}\label{derivmu}
-\mu'(t)\ge \int_{\{\nabla_H u\ne 0\}} \frac{1}{\normH{\nabla_H u}} \,d|D_\gga \chi_{E_t}|  \qquad  {\rm for\ a.e.\ } t\in\R.
\end{equation}

\noindent Since $u^*$  is a function depending only on one variable, arguing as in \cite{CF}  we get 
$$\frac{d}{dt}\,  \gga(\{u^*>t\} \cap \{\nabla_H u^*=0\})=0 \qquad   {\rm for\ a.e.\ }  t\in\R.
$$ 
As $u^*$ is monotone we have that $\normH{\nabla_H u^*}$ is constant on $\{u^*=t\} \cap \{\nabla_H u^* \neq 0\}$. 
Observe also that, being $u^*$ one-dimensional, $\{u^*=t\}$ has a well defined meaning. We thus find:
\begin{equation*}
-\mu'(t)=  \frac{P_{\gga}(\{u^*>t\})}{|\nabla_H u^*|_{\{u^*=t\}}}  \qquad   {\rm for\ a.e.\ }  t\in\R,
\end{equation*}
which implies, recalling \eqref{derivmu},
\begin{equation}\label{egalmustar}
\frac{P_{\gga}(\{u^*>t\})}{|\nabla_H u^*|_{\{u^*=t\}}}  \ge \int_{\{\nabla_H u\ne 0\}} \frac{1}{\normH{\nabla_H u}} \,d|D_\gga \chi_{E_t}| 
\qquad   {\rm for\ a.e.\ }  t\in\R.
\end{equation}

\noindent Let us note that as in \cite[Lem. 4.2]{CK}, using \eqref{coairg2} with $g=\chi_{\{\nabla_H u =0\}}$ we find
\[\int_X \chi_{\{\nabla_H u =0\}} \normH{\nabla_H u} d\gga =0=\int_{\R} \int_X \chi_{\{\nabla_H u =0\}} d|D_\gga \chi_{E_t}|(x)\; dt\]
and thus for almost every $t \in \R$, 
\[\int_X \chi_{\{\nabla_H u =0\}} d|D_\gga \chi_{E_t}|(x)=0.\]
This shows that for almost every $t\in \R$, $\nabla_H u(x) \neq 0$ for $|D_\gga \chi_{E_t}|$-almost every $x\in X$ and thus 
\begin{equation}\label{egalmustice}
\int_{\{\nabla_H u\ne 0\}} \frac{1}{\normH{\nabla_H u}} d|D_\gga \chi_{E_t}|(x)=\int_{X} \frac{1}{\normH{\nabla_H u}} d|D_\gga \chi_{E_t}|(x) \qquad {\rm for\ a.e.\ } t\in \R.
\end{equation}

\noindent By \eqref{coairg2}, \eqref{isoper},\eqref{egalmustar} and \eqref{egalmustice}, we eventually get
\[\begin{array}{clc}
\displaystyle \int_{X} |\nabla_H u^*|^2 d\gga& \displaystyle =\int_{\R} |\nabla_H u^*|_{\{u^*=t\}} P_\gga(\{u^*>t\}) dt& \\[14pt]
																 \\
&\displaystyle =\int_{\R} \frac{P_{\gga}(\{u^*>t\})^2}{ \left(\frac{P_\gga(\{u^*>t\})}{|\nabla_H u^*|_{\{u^*=t\}}}\right)}dt&\\[18pt]
																 \\
&\displaystyle \le \int_{\R} \frac{P_{\gga}(\{u>t\})^2}{\int_{X} \frac{1}{\normH{\nabla_H u}} d|D_\gga \chi_{E_t}|(x)}\; dt 
																\\[14pt] 
																 \\
&\displaystyle \le \int_{\R} \int_{X} \normH{\nabla_H u} \; d|D_\gga \chi_{E_t}|(x)\; dt
																\\[14pt]
																&=\displaystyle \int_{X} \normH{\nabla_H u}^2 d\gga\,. 
																\end{array}\]
As a consequence, if equality holds in \eqref{polya}, then 
equality holds in the Gaussian isoperimetric inequality, that is,
\[P_{\gga}(u>t)=P_{\gga}(u^*>t)\qquad   {\rm for\ a.e.\ }  t\in\R.\]
This implies that almost every level-set of $u$ is a half-space, i.e. for almost every $t\in\R$ there exists $\hath_t \in \cH$ such that $\{u>t\}=\{\hath_t < \alpha(\mu(t))\}$, and without loss of generality we can assume that $\normH{h_t}=1$. Such half-spaces being nested, 
by Lemma \ref{lemfin} we have that $\hath_t$ does not depend on $t$ and thus $u(x)=v(\hath (x))$. 
\end{proof}

\begin{remarque} \rm

We notice that the fact that equality in \eqref{polya} implies that $u$ is one-dimensional is a specific feature of the
Gaussian setting, and the analogous statement does not hold 
for the Schwarz symmetrization in the Euclidean case \cite{BZ}.
Indeed, this property is a consequence of the fact that Gaussian measures, differently from the Lebesgue measure,
are not invariant under translations.
\end{remarque}

\section{Relaxation of perimeter}\label{secrelax}

In this section we compute the relaxation of the perimeter functional
 \[F(u):=\left\{\begin{array}{ll}
\Pgamma{E} \qquad& \textrm{if } u=\chi_E\\[8pt]
+\infty \qquad & \textrm{otherwise}
\end{array}\right.\]
with respect to the weak $\Ldeu$-topology. The fact that $F$ is not lower semicontinuous can be easily checked by taking the sequence ${E_n}=\{ \sprod{x_n^*}{x}<0\}$. Indeed,
the characteristic functions of these sets weakly converge to the constant function $1/2$, which is not a characteristic function,
while the perimeter of $E_n$ is constantly equal to $1/\sqrt{2\pi}$.

We will show that the relaxation of $F$ is equal to
\[
\barf(u):=\left\{\begin{array}{ll}
\displaystyle \int_X \sqrt{\cU^2(u)+|D_\gga u|^2}d \gga \qquad& \textrm{if } 0\le u\le 1 \quad \gga - a. e.\\[8pt]
+\infty \qquad & \textrm{otherwise}
\end{array}\right.
\]
where 
\[ 
\int_X \sqrt{\cU^2(u)+|D_\gga u|^2}d \gga=\int_X \sqrt{\cU^2(u)+\normH{\nabla_H u}^2}d \gga + |D^s_\gga u|(X)
\] 
with $D_\gga u= \nabla_H u d \gga + D^s_\gga u$. 
Observe that the functional $\barf$ already appears in the seminal work of Bakry and Ledoux \cite{bakryledoux} 
and in the earlier work of Bobkov \cite{bobkov} in the context of log-Sobolev inequalities. This functional has been also studied in \cite{CK}. See also \cite[Remark 4.3]{AMMP} where it appears in a setting closer to ours.

Let us first recall the definition of the lower semicontinous envelope of a function (see \cite{DM} for more details).
\begin{defin}
Let $X$ be a topological vector space. For every function $F: X\rightarrow \overline{\R}$, its lower semicontinuous envelope (or relaxed function) is the greatest lower semicontinuous function that lies below $F$.
\end{defin}
\noindent When $X$ is a metric space, the following caracterization holds.
\begin{prop}
Let $X$ be a metric space. For every function $F:X\rightarrow \overline{\R}$, and every $x\in X$, the relaxed function $\barf$ is given by
\[\barf(x)=\inf \left\{\varliminf_{n\to \infty} F(x_n) \; :\; x_n\to x \right\}.\] 
\end{prop}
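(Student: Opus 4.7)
The plan is to denote by $G(x) := \inf\{\varliminf_{n\to\infty} F(x_n) : x_n \to x\}$ the functional on the right-hand side, and then to prove $G = \barf$ by establishing three properties: (a) $G \le F$ pointwise, (b) $G$ is lower semicontinuous, and (c) every lower semicontinuous $H : X \to \overline{\R}$ with $H \le F$ satisfies $H \le G$. By the definition of $\barf$ as the greatest lower semicontinuous minorant of $F$, these three points together identify $G$ with $\barf$.

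Property (a) is immediate by testing with the constant sequence $x_n \equiv x$, which yields $G(x) \le F(x)$. Property (c) is a direct consequence of the definition of lower semicontinuity: for any sequence $x_n \to x$,
\[
H(x) \le \varliminf_{n\to\infty} H(x_n) \le \varliminf_{n\to\infty} F(x_n),
\]
and taking the infimum over all such sequences yields $H(x) \le G(x)$.

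The substantive step is (b), where the metric structure of $X$ enters through a diagonal argument. Given $x_k \to x$ in $X$, one may assume $\ell := \varliminf_k G(x_k) < +\infty$, and after extracting a subsequence, that $\ell = \lim_k G(x_k)$. For each $k$, use the definition of $G(x_k)$ to choose a sequence $(y_k^n)_{n \in \N}$ with $y_k^n \to x_k$ as $n \to \infty$ and $\varliminf_n F(y_k^n) \le G(x_k) + 1/k$. Pick $n_k$ large enough so that simultaneously $d(y_k^{n_k}, x_k) \le 1/k$ and $F(y_k^{n_k}) \le G(x_k) + 2/k$, and set $z_k := y_k^{n_k}$. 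By the triangle inequality, $z_k \to x$, while $\limsup_k F(z_k) \le \lim_k G(x_k) = \ell$, so by definition of $G$,
\[
G(x) \le \varliminf_k F(z_k) \le \ell = \varliminf_k G(x_k),
\]
which is precisely the lower semicontinuity of $G$.

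The main obstacle is organising the diagonal argument in (b) so that the diagonal sequence $z_k$ genuinely converges to $x$ while its $F$-values still control $\varliminf_k G(x_k)$. Both requirements are handled cleanly by the quantitative $1/k$-tolerance allowed by the metric; in a general topological space the conclusion might fail without extra countability assumptions, but the metric hypothesis makes the construction elementary.
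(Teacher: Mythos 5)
Your proof is correct and is the standard argument (the paper itself states this proposition without proof, deferring to Dal Maso's book, where essentially this diagonal construction appears). The only cosmetic point is that when $G(x_k)=-\infty$ the tolerance ``$G(x_k)+1/k$'' should be replaced by, say, $\max(G(x_k),-k)+1/k$, but this does not affect the argument.
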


We now show a representation formula for $\barf$ which is reminiscent of the definition of 
the total variation and of the nonparametric area functional (see \cite{giusti}).
We start with a preliminary result.

\begin{lem}\label{lemdual}
Let $g \in L^\infty (X)$ with $g \ge 0$, let $\mu \in \cM(X,H)$, and define
\[
\tilde f(g,\mu):=\sqrt{g^2+\normH{h}^2}\, d\gga + |\mu^s|\,,
\]
where $\mu=h\,\gamma+\mu^s$.
There holds
\begin{equation}\label{eqtilde}
\tilde f(g,\mu)(X)=\sup_{ \stackrel{\Phi\in L^1_\mu(X,H)}{ \xi \in L^1_\mu(X)}} 
\left\{ \int_X [\Phi, d\mu]_H +\int_X g\,\xi \, d\gga : \; 
\normH{\Phi}^2+|\xi|^2\le 1\ {\rm a.e.\ in\ } X\right\}.
\end{equation}
\end{lem}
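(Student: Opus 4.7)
The plan is to verify the two inequalities in \eqref{eqtilde} separately, treating the absolutely continuous and singular parts of $\mu$ independently via the decomposition $\mu = h\gga + \mu^s$.

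For the inequality $\tilde f(g,\mu)(X)\ge$ sup, I take any admissible pair $(\Phi, \xi)$ and split
\[
\int_X [\Phi, d\mu]_H + \int_X g\xi\, d\gga \;=\; \int_X \bigl([\Phi, h]_H + g\xi\bigr)\, d\gga \;+\; \int_X [\Phi, d\mu^s]_H.
\]
The $\gga$-integral is then handled by pointwise Cauchy-Schwarz in the Hilbert space $H \oplus \R$ applied to the pairs $(\Phi, \xi)$ and $(h, g)$, which yields
\[
[\Phi, h]_H + g\xi \;\le\; \sqrt{\normH{\Phi}^2 + \xi^2}\cdot\sqrt{\normH{h}^2 + g^2} \;\le\; \sqrt{\normH{h}^2 + g^2}.
\]
For the singular part, the hypothesis $\normH{\Phi} \le 1$ gives $\int_X [\Phi, d\mu^s]_H \le \int_X \normH{\Phi}\, d|\mu^s| \le |\mu^s|(X)$. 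Summing the two contributions recovers exactly $\tilde f(g, \mu)(X)$.

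For the reverse inequality I would exhibit an explicit admissible pair attaining the value. Since $\mu^s \perp \gga$, I fix a Borel set $N \subset X$ with $\gga(N) = 0$ and $|\mu^s|(X \setminus N) = 0$, and invoke the polar decomposition of the $H$-valued measure $\mu^s$ to obtain a Borel map $\sigma : X \to H$ with $\normH{\sigma} = 1$ $|\mu^s|$-a.e.\ and $\mu^s = \sigma\, |\mu^s|$. I then set $(\Phi, \xi) := (\sigma, 0)$ on $N$ and
\[
(\Phi, \xi) := \left(\frac{h}{\sqrt{g^2 + \normH{h}^2}},\; \frac{g}{\sqrt{g^2 + \normH{h}^2}}\right) \qquad \text{on } X \setminus N,
\]
with $(\Phi, \xi) = (0,0)$ on the residual set where the denominator vanishes. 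One checks pointwise that $\normH{\Phi}^2 + |\xi|^2 \le 1$, and since both functions are bounded and $|\mu|(X) < \infty$, the admissibility conditions $\Phi \in L^1_\mu(X, H)$ and $\xi \in L^1_\mu(X)$ are automatic. Plugging in, the integrand on $X \setminus N$ collapses to $\sqrt{g^2 + \normH{h}^2}$ against $d\gga$, while on $N$ one computes $\int_N [\Phi, d\mu^s]_H = \int_N \normH{\sigma}^2\, d|\mu^s| = |\mu^s|(X)$, reconstructing $\tilde f(g, \mu)(X)$.

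The main obstacle I anticipate is ensuring Borel measurability of the polar $\sigma$ for the $H$-valued measure $\mu^s$; this is a standard result in vector measure theory provided $H$ is separable, which holds in our setting because $X$ is a separable Banach space, and I would cite it rather than reprove it. If one prefers to avoid the direct polar construction, the same conclusion can be reached by approximation: replace $\mu^s$ by convolutions with finite-dimensional projections to get a sequence of measures with smooth densities for which the construction is transparent, and pass to the limit using the lower semicontinuity of $\tilde f$. The remaining manipulations are routine pointwise algebra and the finiteness of $|\mu|(X)$.
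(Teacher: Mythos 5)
Your proof is correct, but it takes a genuinely different route from the paper's. The paper follows Demengel--Temam: in a first step it proves the identity for the absolutely continuous part $h\gga$ by approximating $(g,h)$ with simple ($H$-valued step) functions and invoking the convex-conjugate formula $f_\gl^*(\Phi)=-\gl\sqrt{1-\normH{\Phi}^2}$ on each piece; in a second step it adds $|\mu^s|(X)$ by gluing a near-optimal field for the singular part (supported near the support of $\mu^s$) with a near-optimal field for the absolutely continuous part; a third step eliminates $\xi$ by optimizing it out pointwise. You instead prove ``$\ge$'' by a single pointwise Cauchy--Schwarz inequality in $H\oplus\R$ together with $\normH{\Phi}\le 1$ on the singular set, and ``$\le$'' by exhibiting an explicit admissible pair -- the normalized vector $(h,g)/\sqrt{g^2+\normH{h}^2}$ off a $\gga$-null carrier $N$ of $\mu^s$, and the polar $\sigma$ of $\mu^s$ on $N$ -- which actually attains the supremum. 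Your argument is shorter and yields the extra information that the supremum is a maximum; its only external input is the polar decomposition $\mu^s=\sigma|\mu^s|$ with $\normH{\sigma}=1$ $|\mu^s|$-a.e., which is legitimate here since $H$ is a separable Hilbert space (hence has the Radon--Nikodym property), and which you correctly flag. The paper's more roundabout construction avoids the exact maximizer and is the version that generalizes to arbitrary convex functions of measures; its gluing step also anticipates the passage to continuous cylindrical test functions carried out in Proposition \ref{dual}, whereas your maximizer is only measurable -- but that is all Lemma \ref{lemdual} as stated requires, since the sup there is over $L^1_\mu$ fields and the restriction to $\FCb$ is done separately by density.
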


\begin{proof}
The proof is adapted from \cite{DemTem}.

Notice first that, for $(\gl,p)\in \R \times H$, the function 
$f(\gl,p):=\sqrt{\gl^2+\normH{p}^2}$ defines a norm on the product space $\R\times H$. 
Moreover,
if we let ${f}_\gl(p):=\sqrt{\gl^2+\normH{p}^2}$, then the convex conjugate of 
${f}_\gl$ is ${f}^*_\gl(\Phi)=-\gl\sqrt{1-\normH{\Phi}^2}$. We divide the proof into three steps.

\smallskip

\noindent{\it Step 1.}
Let 
\[
M(g,\mu)=\sup_{\Phi\in L^1_\mu(X,H)} \left\{ \int_X [\Phi, d\mu]_H +\int_X g\sqrt{1-\normH{\Phi}^2} \, d\gga 
:\;\normH{\Phi}\le 1 \ {\rm a.e.\ in\ } X\right\}.
\]
We will show that 
\begin{equation}\label{eqM}
M(g,h\gga)=\int_X f(g,h)d\gga.
\end{equation}
By definition of convex conjugate, it is readily checked that $M(g,h\gga)\le \int_X f(g,h)d\gga$.
We thus turn to the other inequality. By definition of the Bochner integral, 
for every $\gd>0$, there exists $h_i \in H$ and $A_i \subset X$ with $A_i$ disjoints Borel sets and $i\in[1,m]$ such that if we set 
\[\theta= \sum_{i=1}^{m} \chi_{A_i} h_i\]
then $|\theta-h|_{L^1_\gga} \le \gd$. Analogously there exists $\eta_i\in X$ such that setting
\[\tilde{g}=\sum_{i=1}^{m} \chi_{A_i} \eta_i\]
we have $|\tilde{g}-g|_{L^1_\gga} \le \gd$. 
By the observation at the beginning of the proof and the triangle inequality we get
\[|f(\tilde{g},\theta)-f(g,h)|\le f(\tilde{g}-g,\theta-h)|\le |\tilde{g}-g|+\normH{\theta-h}.\]

\noindent For every $i$, by definition of convex conjugate, there exists $\xi_i \in H$ with $\normH{\xi_i}\le1$ such that
\[
f(\eta_i,h_i)\le [\xi_i, h_i]_H + \eta_i \sqrt{1-\normH{\xi_i}^2} +\delta.\]

\noindent From this, setting $\Phi=\sum_{i=1}^m \chi_{A_i} \xi_i$ we have
\begin{align*}
\int_X f(g,h) d\gga &\le \int_X f(\tilde{g},\theta) d\gga + 2\delta \\
										&= \sum_{i=1}^{m} \int_{A_i} f(\eta_i,h_i) d\gga + 2\delta\\
										&\le \sum_{i=1}^{m} \int_{A_i} [\xi_i, h_i]_H + \eta_i \sqrt{1-\normH{\xi_i}^2} d\gga +3\delta\\
					          &=\int_X [\Phi, h]_H + \tilde{g}\sqrt{1-\normH{\Phi}^2} d\gga + 3 \delta. 
										\end{align*}

\noindent  Since $\bbar\, \tilde{g}\sqrt{1-\normH{\Phi}^2}-g\sqrt{1-\normH{\Phi}^2}\, \bbar\le |\tilde{g}-g|$ we find
\begin{align*}
\int_X f(g,h) d\gga &\le \int_X \Phi\cdot h -g\sqrt{1-\normH{\Phi}^2} d\gga + 4\delta\\
											&\le M(g, h \gga) + 4\delta.				
\end{align*}
Since $\delta$ is arbitrary we have $M(g,h\gga)=\int_X f(g,h)d\gga$.

\smallskip

\noindent{\it Step 2.} The proof proceeds exactly as in \cite{DemTem} and we only sketch it. 
Recalling \eqref{eqM}, it remains to show that
\[
M(g,h \gga+\mu^s)=M(g,h \gga)+ |\mu^s|(X).
\] 
One inequality is easily obtained, since
\begin{align*}
M(g,h \gga+\mu^s)&=\sup_\Phi \int_X [\Phi, h]_H d\gga +\int_X \Phi\cdot d\mu^s+\int_X g(x)\sqrt{1-\normH{\Phi}^2} d\gga\\
												&\le \left(\sup_\Phi \int_X [\Phi, h]_H d\gga +\int_X g(x)\sqrt{1-\normH{\Phi}^2} d\gga\right) +\int_X |d\mu^s|\\
												&=M(g,h \gga)+ |\mu^s|(X).
												\end{align*}
For the opposite inequality, let $\delta>0$ be fixed then there exists $\Phi_1$ and $\Phi_2$ such that
\begin{align*}
M(g,h\gga) &\le \int_X [\Phi_1, h]_H d\gga + \int_X g(x)\sqrt{1-\normH{\Phi_1}^2} d\gga + \delta\\
|\mu^s|(X)&\le \int_X [\Phi_2, d\mu^s]_H + \delta.
\end{align*}
Taking $\Phi$ equal to $\Phi_2$ on a sufficiently small neighborhood of the support of $\mu^s$ and equal to $\Phi_1$ outside this neighborhood, we get
\begin{align*}
M(g,h\gga)+|\mu^s|(X)&\le\int_X [\Phi, h]_H d\gga + \int_X g(x)\sqrt{1-\normH{\Phi}^2} d\gga + \int_X [\Phi, d\mu^s]_H + C\delta\\
&\le M(g, h \gga+ \mu^s)+C\delta
\end{align*}
which gives the opposite inequality.

\smallskip

\noindent{\it Step 3.} In order to conclude the proof, it is enough to notice that 
for every $\Phi \in L^1_\mu(X,H)$, with $\normH{\Phi}\le 1$, we have 
\begin{eqnarray*}
&&\sup_{\xi \in L^1_\mu(X)} 
\left\{ \int_X [\Phi, d\mu]_H +\int_X g\,\xi \, d\gga : \; 
\normH{\Phi}^2+|\xi|^2\le 1\ {\rm a.e.\ in\ } X\right\}
\\
&&\quad =
\int_X [\Phi, d\mu]_H +\int_X g\sqrt{1-\normH{\Phi}^2} \, d\gga.
\end{eqnarray*}
\end{proof}

\begin{prop}\label{dual}
Let $u \in BV_\gga(X)$ then
\begin{equation}\label{eqqa}
\barf(u)=\sup_{ \stackrel{\Phi\in \FCb(X,H)}{ \xi \in \FCb(X)}} \left\{ \int_X \left(u \dive_\gga \Phi + \cU(u) \xi \right) d\gga \,: \quad \normH{\Phi(x)}^2+|\xi(x)|^2\le 1 \; \forall x \in X \right\}.
\end{equation}
\end{prop}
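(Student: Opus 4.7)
I would prove both inequalities separately, denoting by $G(u)$ the right-hand side of \eqref{eqqa}; one may assume $0\le u\le 1$ since otherwise $\barf(u)=+\infty$ and the supremum is easily seen to be infinite as well. For the inequality $G(u)\le\barf(u)$, let $(\Phi,\xi)\in\FCb(X,H)\times\FCb(X)$ satisfy $\normH{\Phi}^2+|\xi|^2\le 1$. Theorem \ref{defBV} yields $\int_X u\,\dive_\gga\Phi\,d\gga=-\int_X[\Phi,dD_\gga u]_H$. Decomposing $D_\gga u=\nabla_H u\,d\gga+D_\gga^s u$ and applying pointwise Cauchy--Schwarz to the vectors $(-\Phi,\xi)$ and $(\nabla_H u,\cU(u))$ in $H\times\R$ on the absolutely continuous part, together with $|[\Phi,dD_\gga^s u]_H|\le\normH{\Phi}\,d|D_\gga^s u|\le d|D_\gga^s u|$ on the singular part, gives
\[
\int_X u\,\dive_\gga\Phi\,d\gga+\int_X\cU(u)\,\xi\,d\gga\le\int_X\sqrt{\cU^2(u)+\normH{\nabla_H u}^2}\,d\gga+|D_\gga^s u|(X)=\barf(u),
\]
and taking the supremum yields $G(u)\le\barf(u)$.

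For the opposite inequality, I would apply Lemma \ref{lemdual} with $g:=\cU(u)\in L^\infty$ and $\mu:=D_\gga u\in\cM(X,H)$. This represents $\barf(u)$ as the supremum of $\int_X[\Phi,dD_\gga u]_H+\int_X\cU(u)\,\xi\,d\gga$ over $\Phi\in L^1_{|D_\gga u|}(X,H)$, $\xi\in L^1_\gga(X)$ satisfying $\normH{\Phi}^2+|\xi|^2\le 1$. After replacing $\Phi$ by $-\Phi$ and integrating by parts, this equals the supremum of $\int_X u\,\dive_\gga\Phi+\cU(u)\,\xi\,d\gga$ over the same larger class of test functions, so it remains only to show that restricting to $\FCb(X,H)\times\FCb(X)$ does not decrease this supremum.

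The density argument proceeds by first scaling admissible $L^1$ test functions $(\Phi,\xi)$ by $(1-\eps)$ to ensure $\normH{\Phi}^2+|\xi|^2\le(1-\eps)^2$. Then I would approximate componentwise: $\xi$ by $\bEm\xi$ followed by finite-dimensional mollification, converging in $L^1_\gga$ by martingale convergence and bounded by $1$ via Jensen's inequality; $\Phi$ by exploiting that $\FCb(X,H)$ is dense in $L^1_{|D_\gga u|}(X,H)$ (since cylindrical sets generate the Borel $\sigma$-algebra on the separable Banach space $X$, bounded Borel functions are approximated in $L^1$ by bounded cylindrical ones, which are then mollified in the parameter space to become smooth). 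The joint pointwise constraint is then restored by composing with the $1$-Lipschitz radial projection of $H\times\R$ onto $\bar B_1$; since the target $(1-\eps)(\Phi,\xi)$ lies strictly inside the ball, this projection acts as the identity on the limit, and a final mollification in the parameter space (which preserves the ball by convexity) recovers $C^1$-smoothness. Letting $n\to\infty$, then $\eps\to 0$, and taking the supremum over $(\Phi,\xi)$, gives $\barf(u)\le G(u)$.

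The main obstacle is this density step: one must approximate simultaneously with respect to two different reference measures ($|D_\gga u|$, possibly singular with respect to $\gga$, for $\Phi$, and $\gga$ itself for $\xi$) while preserving the \emph{joint} pointwise constraint $\normH{\Phi}^2+|\xi|^2\le 1$. The $(1-\eps)$-scaling, combined with convexity of the unit ball, is what makes the Lipschitz radial projection harmless in the limit and reconciles the two separate $L^1$-approximations without ever violating the joint bound.
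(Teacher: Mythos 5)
Your proposal follows essentially the same route as the paper: the paper's entire proof consists of applying Lemma \ref{lemdual} with $g=\cU(u)$ and $\mu=D_\gga u$, and then invoking the tightness of $\mu$ to assert that $\FCb(X,H)$ is dense in $L^1_\mu(X,H)$, so that the supremum may be restricted to cylindrical test functions. Your first inequality is just the easy half of Lemma \ref{lemdual} combined with the integration by parts of Theorem \ref{defBV}, and your second inequality is exactly the paper's density reduction, spelled out in detail where the paper gives one sentence.

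Two points in your density step need tightening. First, you write that after applying Lemma \ref{lemdual} one ``integrates by parts'' over the larger class $\Phi\in L^1_\mu(X,H)$; but $\dive_\gga\Phi$ is not defined for a general $L^1_\mu$ vector field, so the order must be reversed: first approximate by cylindrical $\Phi_n$ (for which $\int_X[\Phi_n,dD_\gga u]_H=-\int_X u\,\dive_\gga\Phi_n\,d\gga$ is legitimate), then pass to the limit in the pairing $\int_X[\cdot,dD_\gga u]_H$. Second, and more substantively, the radial projection onto $\bar B_1$ couples the two components: the projected $H$-component of $(\Phi_n,\xi_n)$ differs from $(1-\eps)\Phi$ by an amount controlled by $\normH{\Phi_n-(1-\eps)\Phi}+|\xi_n-(1-\eps)\xi|$, and you need this to vanish in $L^1_{|D_\gga u|}$, whereas you only arranged $\xi_n\to\xi$ in $L^1_\gga$; if $D_\gga u$ has a nontrivial singular part this does not follow. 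The fix is simply to perform \emph{all} approximations with respect to the single finite tight measure $\gga+|D_\gga u|$ (cylindrical functions are dense in $L^1$ of any tight finite Borel measure on $X$), after which your $(1-\eps)$-scaling and projection argument goes through as you describe. With these adjustments the proof is correct and, modulo the level of detail, identical in strategy to the paper's.
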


\begin{proof}
We apply Lemma \ref{lemdual} with $\mu=Du$ and $g=\cU(u)$.
Since $\mu$ is tight \cite{AMMP},
the space $\FCb(X,H)$ is dense in $L^1_\mu(X,H)$ so that we can restrict the supremum in \eqref{eqqa}
to smooth cylindrical functions $\Phi,\,\xi$.
\end{proof}

\begin{remarque}\rm
Since $\cU$ is concave, the duality formula \eqref{eqqa} is not sufficient to prove that $\barf$ is lower semicontinuous for the weak $\Ldeu$-topology. 
It shows however the lower-semicontinuity of $\barf$ in the strong $\Ldeu$-topology.
\end{remarque}

\noindent We now prove that $\barf$ is the lower semicontinuous envelope of $F$.

\begin{thm}\label{relax}
$\barf$ is the relaxation of $F$ in the weak  $\Ldeu$-topology. 
\end{thm}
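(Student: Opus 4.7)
The plan is to verify the two inequalities defining the relaxation: that $\varliminf_n P_\gga(E_n) \ge \barf(u)$ for every sequence $\chi_{E_n} \rightharpoonup u$ weakly in $\Ldeu$ (lower bound), and that some such sequence realizes $P_\gga(E_n) \to \barf(u)$ (recovery sequence). Both parts exploit the Ehrhard-set machinery of Section \ref{rapehr} to reduce the infinite-dimensional problem to finite-dimensional ones, where the standard compact embedding of $BV$ into $L^1$ is available.

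For the lower bound, fix $m \ge 1$ and let $E_n^{*}$ denote the Ehrhard symmetrization of $E_n$ along the first $m+1$ variables. Proposition \ref{priso}, together with the identification $E^{*}=ES_m(\bEm\chi_E)$ and Proposition \ref{egalite}, gives
\[
\int_{\R^m}\sqrt{\cU^2(\bEm\chi_{E_n}) + |D_{\gga_m}\bEm\chi_{E_n}|^2}\,d\gga_m \;=\; P_\gga(E_n^{*}) \;\le\; P_\gga(E_n).
\]
Continuity of the conditional expectation on $\Ldeu$ yields $\bEm\chi_{E_n}\rightharpoonup \bEm u$ weakly in $\Ldeufin$; the uniform bound $|D_{\gga_m}\bEm\chi_{E_n}|(\R^m)\le P_\gga(E_n)$ plus finite-dimensional $BV$-compactness upgrades this to strong $L^1_{\gga_m}$-convergence along a subsequence. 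The strong-$L^1$ lower semicontinuity of $v\mapsto\int_{\R^m}\sqrt{\cU^2(v)+|Dv|^2}\,d\gga_m$, a consequence of Proposition \ref{dual} applied on $\R^m$, then gives $\barf(\bEm u)\le \varliminf_n P_\gga(E_n)$. Letting $m\to\infty$ and using that $\bEm u\to u$ in $\Ldeu$, one final application of strong-$L^1$ LSC of $\barf$ delivers $\barf(u)\le \varliminf_n P_\gga(E_n)$.

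For the recovery sequence, set $E_m:=ES_m(\bEm u)$, so Proposition \ref{egalite} gives $P_\gga(E_m)=\barf(\bEm u)$. Using $\mathbb{E}_k\chi_{ES_n(v)}=\mathbb{E}_k v$ for $k\le n$ together with the tower property of conditional expectation, $\mathbb{E}_k\chi_{E_m}=\mathbb{E}_k u$ for every $k\le m$; testing against cylindrical functions, which are dense in $\Ldeu$, and using the $\Ldeu$-boundedness of $\chi_{E_m}$, shows $\chi_{E_m}\rightharpoonup u$ weakly in $\Ldeu$. It thus suffices to prove $\barf(\bEm u)\to\barf(u)$.

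The main obstacle is this last convergence. Monotonicity $\barf(\bEm u)\le\barf(\mathbb{E}_n u)$ for $m\le n$ follows by applying Proposition \ref{priso} to $ES_n(\mathbb{E}_n u)$, whose Ehrhard symmetrization in the first $m+1$ variables is precisely $ES_m(\bEm u)$; lower semicontinuity then gives $\barf(u)\le \varliminf_m \barf(\bEm u)$. The reverse inequality $\limsup_m\barf(\bEm u)\le \barf(u)$ I would obtain by interpreting $\barf(u)$ as the Gaussian perimeter, in the enlarged Wiener space $X\oplus\R$, of the set $\{(x,y)\in X\oplus\R : y<\alpha(u(x))\}$; after a suitable reindexing of coordinates, $E_m$ is then the Ehrhard symmetrization of this set along all coordinates except the first $m$, and Proposition \ref{priso} yields $P_\gga(E_m)\le\barf(u)$. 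Making this extended-space interpretation rigorous, and verifying that the symmetrization indeed produces $E_m$, is the delicate technical point.
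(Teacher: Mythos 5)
Your lower-bound argument and your construction of the recovery sequence $E_m=ES_m(\bEm u)$, including the verification that $\chi_{E_m}\rightharpoonup u$ weakly in $\Ldeu$, coincide with the paper's proof; the only variant is that you finish the liminf by applying the strong-$L^1$ lower semicontinuity of $\barf$ to $\bEm u\to u$, which is fine. The monotonicity $\barf(\bEm u)\le\barf(\mathbb{E}_n u)$ for $m\le n$, obtained by Ehrhard-symmetrizing $ES_n(\mathbb{E}_n u)$ in the first $m+1$ variables, is also correct and is a nice geometric substitute for part of the paper's duality argument.

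The genuine gap is in the remaining inequality $\limsup_m\barf(\bEm u)\le\barf(u)$, which is the crux of the whole theorem. Your plan is to realize $\barf(u)$ as $P_{\gga\otimes\gga_1}(S_u)$ for the subgraph-type set $S_u=\{(x,y)\in X\oplus\R:\ y<\alpha(u(x))\}$ and then symmetrize $S_u$ in the coordinates $(x_1,\dots,x_m,y)$. The symmetrization step itself is sound: one checks that the conditional expectation of $\chi_{S_u}$ on those $m$ variables is $\bEm u$, so the Ehrhard symmetral of $S_u$ is exactly $E_m$ and Proposition \ref{priso} gives $P_{\gga\otimes\gga_1}(E_m)\le P_{\gga\otimes\gga_1}(S_u)$. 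What is missing is the inequality $P_{\gga\otimes\gga_1}(S_u)\le\barf(u)$ for a general, non-cylindrical $u\in BV_\gga(X)$. This is precisely the infinite-dimensional extension of Proposition \ref{egalite}, which the paper establishes only for $u\in BV_{\gga_m}(\R^m)$: the relevant inequality there rests on Vol'pert's theorem and pointwise approximate differentiability in $\R^{m+1}$, tools that do not transfer to $X\oplus\R$, and a priori it is not even clear that $P_{\gga\otimes\gga_1}(S_u)$ is finite. Trying to fill the gap by cylindrical approximation of $u$ would require exactly the convergence $\barf(\bEm u)\to\barf(u)$ you are trying to prove, so the route is circular as it stands. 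The paper closes this step differently and entirely inside $X$: it expresses $\barf(\bEm u)$ through the duality formula \eqref{eqqa} restricted to cylindrical pairs $(\Phi,\xi)$ depending on the first $m$ variables with $\Phi$ valued in $H_m$, compares each such term with the corresponding term for $u$, and deduces $\barf(\bEm u)\le\barf(u)$ directly. You should either adopt that duality argument or supply an independent proof of the infinite-dimensional version of Proposition \ref{egalite}.
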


\begin{proof}
Let us first notice that $F$ takes finite values only on functions of the closed unit ball of $\Ldeu$ which is metrizable for the weak convergence. 
Therefore the relaxation and the sequential relaxation in the weak topology of $\Ldeu$ coincide. 

Let $\chi_{E_n}$ be a sequence of sets weakly converging in $\Ldeu$ to $u \in BV_\gga(X)$, with uniformly bounded perimeter. We shall show that 
\[
\varliminf_{n\to\infty}  P_\gga({E_n})\ge \barf(u).
\]
Notice that, by weak convergence, we necessarily have $0\le u\le 1$ a.e. on $X$. 

\noindent For all $n\ge 1$ and $k\ge2$, we let $E_n^k$ be the Ehrhard symmetral of $E_n$ with respect to the first $k$ variables. 
Recalling the notation of Section \ref{rapehr}, we have 
\[
\Pgamma{E_n^{k+1}}\le \Pgamma{E_n} \qquad \textrm{and} \qquad {E_n^{k+1}}=ES_k\left(\E_k\chi_{E_n}\right).
\]
As $\displaystyle \totvar{\E_k(\chi_{E_n})}\le \Pgamma{{E_n}}$ and $\E_k(\chi_{E_n})$ depends only on the first $k$ variables,
by the compact embedding of 
$BV_{\gga_k}(\R^k)$ into $L^1_{\gamma_k}(\R^k)$ we can extract a subsequence from 
$\E_k(\chi_{E_n})$ which converges strongly to $u^k:=\E_k(u)$. 
{}From this we get that $E^{k+1}_n=ES_k(\E_k\chi_{E_n})$ tends strongly to $E^{k+1}:=ES_k(u^k)$. 
By the lower semicontinuity of the perimeter we then have
\[
\varliminf_{n \to \infty} \Pgamma{E_n} \ge \varliminf_{n\to \infty} \Pgamma{E^{k+1}_n}\ge \Pgamma{E^{k+1}}.
\] 
For every $\gphi \in \FCb (X)$, with $\gphi$ depending only of the $j\le k$  
first variables, there holds 
\[
\int_X \chi_{E_{k+1}} (x) \gphi(x) d\gga (x)=\int_X u_k(x) \gphi(x) d\gga (x)=\int_X u (x) \gphi(x) d\gga (x),
\]
which implies that the sequence $\chi_{E_{k+1}}$ tends weakly to $u$. 
In order to conclude the proof
it remains to show that
\[
\lim_{k\to \infty} \Pgamma{E^{k+1}}=\barf(u).
\]
Notice that, by Proposition \ref{egalite}, there holds
\[
\Pgamma{E^{k+1}}=\barf(u^k).
\] 
For every $\Phi \in \FCb(X,H)$ and $\xi \in \FCb(X)$, depending on the first $k$ variables and such that the range of $\Phi$ is included in $H_k$, 
by Proposition \ref{dual}, we have
\[
\int_{X} \left(u^k \dive_\gga \Phi + \cU(u^k) \xi\right) d\gga=\int_{X} \left(u \dive_\gga \Phi + \cU(u) \xi \right)d\gga\le \barf(u).
\]
Taking the supremum in $\Phi,\,\xi$ and recalling \eqref{eqqa},
we then get 
$$
\barf(u^k)\le \barf(u) \qquad {\rm for\ all\ }k.
$$ 
Repeating the same argument with $u^{k+1}$ instead of $u$, we obtain that $\barf(u^k)$ is nondecreasing in $k$. 
Therefore there exists $\ell \ge 0$ such that
\[
\lim_{k\to \infty} \barf(u^k)=
\lim_{k\to \infty} \Pgamma{E^{k+1}}=\ell \le \barf(u).
\]
Assume by contradiction that $\ell<\barf(u)$. Then there exists $\delta>0$ such that $\barf(u^k)\le \barf(u)-\delta$ for all $k$, hence
there exist $N\in\mathbb N$, $\Phi \in \FCb(X,H)$ and $\xi \in \FCb(X)$, depending only on the first $N$ variables, such that
\[
\int_X \left(u^k \dive_\gga \Phi + \cU(u^k) \xi\right) d\gga 
\le \barf(u^k)\le\barf(u)-\delta\le \int_X \left(u \dive_\gga \Phi + \cU(u)\xi\right) d \gga -\frac{\delta}{2},
\]
but for $k>N$ we have
$$ 
\int_X \left(u^k \dive_\gga \Phi + \cU(u^k) \xi\right) d\gga
=\int_X \left(u \dive_\gga \Phi + \cU(u)\xi\right) d \gga$$ 
which leads to a contradiction.
\end{proof}

\begin{remarque}\rm
Theorem \ref{relax}  
provides an example of a nonconvex functional, namely $\barf$, 
which is lower semicontinuous for the weak $\Ldeu$-topology.
We also know that semicontinuity does not holds for general functional of the form
\[J(u)=\int_X f(u,D_\gga u) d\gga\]
since if we take for instance $f(u,p):= \sqrt{g^2(u)+|p|^2}$ with $g$ such that $g(1/2)>\cU(1/2)$ and $g(0)=g(1)=0$, then,  
letting $u_n:=\{\sprod{x_n^*}{x}<0\}$, we have $u_n\rightharpoonup u= 1/2$ weakly in $\Ldeu$, so that 
$$
J(u)=g\left(\frac{1}{2}\right)>
\cU\left(\frac{1}{2}\right)=\frac{1}{\sqrt{2\pi}}=\varliminf_{n\to\infty} J(u_n).
$$ 
One could wonder what are the right hypotheses for a functional of this form to be lower semicontinuous
with respect to the weak topology.
\end{remarque}

\section{$\gG$-limit for the Modica-Mortola functional}\label{MMM}
Let us briefly recall the definition of $\gG$-convergence.
We refer to \cite{DM} for a comprehensive treatment of the subject.

\begin{defin}
Let $X$ be a topological space, and 
let $F_n:X\to \overline{\R}$ be a sequence of functions. The $\gG$-lower limit and the $\gG$-upper limit of the sequence $F_n$ is defined as
\begin{align*}
(\gG-\varliminf_{n\to \infty} F_n)(x)&=\sup_{U\in \mathcal{N}(x)} \, \varliminf_{n\to \infty} \, \inf_{y\in U} F_n(y)\\
(\gG-\varlimsup_{n\to \infty} F_n)(x)&=\sup_{U\in \mathcal{N}(x)} \, \varlimsup_{n\to \infty} \, \inf_{y\in U} F_n(y)
\end{align*}
where $\mathcal{N}(x)$ denotes the set of all open neighbourhoods of $x$ in $X$. When the $\gG$-lower limit and the $\gG$-upper limit coincide, 
we say that the sequence $F_n$ $\gG$-converges.
\end{defin}

\noindent As for the relaxation, if $X$ is a metric space we have a sequential caracterization of the $\gG$-convergence.
\begin{thm}\label{thmetric}
Let $X$ be a metric space. A sequence of functions $F_n$ $\gG$-converges to $F:X\to\overline{\R}$ if and only if the following two conditions hold:
\begin{itemize}
\item for every sequence $x_n$ converging to $x$, it holds $\displaystyle \varliminf_{n \to \infty} F_n(x_n)\ge F(x)$
\item for every $x\in X$ there exists a sequence $x_n$ converging to $x$ with $\displaystyle \varlimsup_{n\to \infty} F_n(x_n) \le F(x)$.
\end{itemize}
\end{thm}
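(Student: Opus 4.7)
The plan is to exploit the fact that in a metric space the filter $\mathcal{N}(x)$ admits a countable decreasing base, namely the open balls $B(x,1/k)$, so that the sup over neighborhoods in the definitions of $\gG$-$\varliminf$ and $\gG$-$\varlimsup$ collapses to $\lim_{k\to\infty}$ along this base (the quantities $\inf_{B(x,1/k)} F_n(y)$ being monotone nondecreasing in $k$). Once this reduction is in place, both directions of the equivalence reduce to routine diagonal arguments.

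First I would prove the ``only if'' direction. For the liminf inequality, given any sequence $x_n\to x$ and any open $U\ni x$, eventually $x_n\in U$, so $\inf_U F_n\le F_n(x_n)$; passing to $\varliminf_n$ and then to $\sup_U$ gives $(\gG\text{-}\varliminf_n F_n)(x)\le \varliminf_n F_n(x_n)$, and the left-hand side equals $F(x)$ by assumption. For the recovery sequence, assume first $F(x)<+\infty$ (the case $F(x)=+\infty$ being trivial: take $x_n=x$ if isolated, or any sequence $x_n\to x$ otherwise). From $\varlimsup_n \inf_{B(x,1/k)} F_n\le F(x)$ for every $k$, I would produce strictly increasing indices $n_k$ and points $y_k\in B(x,1/k)$ with $F_{n_k}(y_k)\le F(x)+1/k$, and then define the recovery sequence $x_n$ by setting $x_n=y_k$ for $n_k\le n<n_{k+1}$ (and $x_n=x$ for $n<n_1$ if needed), which ensures both $x_n\to x$ and $\varlimsup_n F_n(x_n)\le F(x)$.

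Next I would prove the ``if'' direction. Condition (ii) directly gives $(\gG\text{-}\varlimsup_n F_n)(x)\le F(x)$: for the recovery sequence $x_n\to x$ and any open $U\ni x$, eventually $x_n\in U$, whence $\inf_U F_n\le F_n(x_n)$, $\varlimsup_n \inf_U F_n\le \varlimsup_n F_n(x_n)\le F(x)$, and finally taking $\sup_U$. For the reverse inequality $(\gG\text{-}\varliminf_n F_n)(x)\ge F(x)$, I argue by contradiction: if it failed, there would exist $\eta>0$ with $\varliminf_n \inf_{B(x,1/k)} F_n\le F(x)-\eta$ for every $k$, from which a diagonal extraction yields strictly increasing indices $n_k$ and points $y_k\in B(x,1/k)$ with $F_{n_k}(y_k)\le F(x)-\eta/2$. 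Padding again to a full sequence $x_n\to x$ (with $x_n=y_k$ on each block $n_k\le n<n_{k+1}$) produces $\varliminf_n F_n(x_n)\le F(x)-\eta/2$, contradicting (i). Combined with the trivial $(\gG\text{-}\varliminf_n F_n)\le (\gG\text{-}\varlimsup_n F_n)$, this yields $\gG$-convergence to $F$.

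The only nontrivial point is the bookkeeping in the diagonalization: one must pass from the doubly-indexed family $\{y_n^k\}$ of near-optimizers in the balls $B(x,1/k)$ to a single sequence indexed only by $n$, while preserving both the convergence to $x$ and the estimate on $F_n(x_n)$; the countable base of balls and the monotonicity of $\inf_{B(x,1/k)} F_n$ in $k$ make this standard. The extremal cases $F(x)=\pm\infty$ require a separate one-line discussion (the liminf inequality in (i) is automatic when $F(x)=-\infty$, and the recovery sequence in (ii) can be taken to be any sequence converging to $x$ when $F(x)=+\infty$).
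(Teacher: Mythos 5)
The paper does not actually prove Theorem \ref{thmetric}: it is quoted as a standard result (the reference being \cite{DM}), so there is no in-paper argument to compare yours with; what matters is whether your blind proof is correct. Your overall strategy --- reducing the supremum over $\mathcal{N}(x)$ to the countable decreasing base of balls $B(x,1/k)$, then diagonalizing --- is the standard one, and most steps are right: the liminf part of the ``only if'' direction, the first half of the ``if'' direction, and the contradiction argument for the $\gG$-liminf are all correct. In that last argument, freezing $x_n=y_k$ on the whole block $n_k\le n<n_{k+1}$ is legitimate, because a $\varliminf$ bound only requires control along the subsequence $n=n_k$.

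The construction of the recovery sequence in the ``only if'' direction, however, has a genuine gap. You select a single point $y_k\in B(x,1/k)$ with $F_{n_k}(y_k)\le F(x)+1/k$ and then set $x_n=y_k$ for all $n$ in the block $n_k\le n<n_{k+1}$. For $n$ in that block with $n\ne n_k$ you have no control on $F_n(y_k)$, since the functions $F_n$ are unrelated to one another; so the claimed bound $\varlimsup_n F_n(x_n)\le F(x)$ does not follow. A concrete failure: take $F_n$ equal to $0$ at the single point $1/n$ and equal to $n$ elsewhere; the $\gG$-limit at $x=0$ is $0$, but the frozen-point construction gives $\varlimsup_n F_n(x_n)=+\infty$ as soon as a block has length greater than one. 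The repair uses the full strength of $\varlimsup_n\inf_{B(x,1/k)}F_n\le F(x)$, which you never exploit: for each $k$ there is $N_k$ (which may be taken strictly increasing) such that $\inf_{B(x,1/k)}F_n\le F(x)+1/k$ for \emph{every} $n\ge N_k$; then for each $n$ with $N_k\le n<N_{k+1}$ one chooses $x_n\in B(x,1/k)$ with $F_n(x_n)\le F(x)+2/k$, i.e.\ the near-minimizer must be re-chosen for each $n$, not merely for each $k$. You allude to exactly this doubly-indexed family $\{y_n^k\}$ in your closing remarks, but the construction you actually write down is the singly-indexed one, which fails. (A minor further caveat: when $\inf_{B(x,1/k)}F_n=-\infty$ the selection should read $F_n(x_n)\le F(x)$, say, rather than ``infimum plus $1/k$''.)
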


\noindent Let now $W\in C^1(\R)$ be a double-well potential with minima in $\{0,1\}$, that is, 
$W(t)\ge 0$ for all $t\in \R$, and $W(t)= 0$ iff $t\in \{0,1\}$. We also assume 
$W(t)\ge C(t^2-1)$ for some $C>0$ and $t\in\R$. A typical example of such potential is
$W(t)=t^2(t-1)^2$.

\noindent For any $\eps>0$ we define the functionals $F_\eps:\Ldeu\to [0,+\infty]$ as
\[
F_\eps(u):= \left\{\begin{array}{ll}
\displaystyle \int_X  \left(\frac{\eps}{2}\normH{\nabla_H u}^2 + \frac{W(u)}{\eps} \right) d\gga 
& {\rm if\ }u\in H^1_\gga(X)
\\
\\
+\infty & {\rm if\ }u\in L^2_\gga(X)\setminus H^1_\gga(X)\,.
\end{array}\right.
\]

\noindent We are ready to prove our main $\gG$-convergence result.

\begin{thm}\label{modmortMal}
When $\eps$ tends to zero
the functionals $F_\eps$ $\gG$-converge, in the weak topology of $\Ldeu$, to the functional $c_W \barf$, where $c_W=\int_{0}^1 \sqrt{2W(t)}\, dt$.
\end{thm}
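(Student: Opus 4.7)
The plan is to prove the two $\gG$-convergence inequalities of Theorem \ref{thmetric} separately, working in the weak $\Ldeu$-topology (which is metrizable on $L^2$-bounded sets since $\Ldeu$ is separable).

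\textbf{$\gG$-$\liminf$ inequality.} Suppose $u_\eps\rightharpoonup u$ weakly in $\Ldeu$ with $F_\eps(u_\eps)\le C$; we want $\varliminf_\eps F_\eps(u_\eps)\ge c_W\barf(u)$. Fix $m\ge 2$ and let $u_\eps^{*m}$ denote the $m$-dimensional Ehrhard symmetrization of $u_\eps$. Equimeasurability preserves $\int W(u_\eps)\,d\gga$ while Proposition \ref{ehrfunc} decreases the Dirichlet part, so $F_\eps(u_\eps^{*m})\le F_\eps(u_\eps)\le C$. The function $u_\eps^{*m}$ is cylindrical in the first $m$ coordinates; the Modica-Mortola inequality $F_\eps(v)\ge\int|\nabla_H G(v)|\,d\gga$, with $G(t):=\int_0^t\sqrt{2W(s)}\,ds$, yields a uniform $BV_{\gga_m}$-bound on $G(u_\eps^{*m})$, while $\int W(u_\eps^{*m})\,d\gga\le C\eps$ forces $u_\eps^{*m}\to\{0,1\}$ in measure. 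Using the compact embedding $BV_{\gga_m}(\R^m)\hookrightarrow L^1_{\gga_m}(\R^m)$, a subsequence of $u_\eps^{*m}$ converges in $L^1_{\gga_m}$ to a characteristic function $\chi_{E^m}$. The finite-dimensional Gaussian Modica-Mortola theorem (proved analogously to the Euclidean case, as noted in the introduction) then gives
\[
\varliminf_\eps F_\eps(u_\eps^{*m})\ge c_W\, P_\gga(E^m).
\]

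To identify $E^m$, a layer-cake argument extends Proposition \ref{priso} to the general identity $\mathbb{E}_{m-1}u_\eps^{*m}=\mathbb{E}_{m-1}u_\eps$; passing to weak $L^2$-limits on both sides gives $\mathbb{E}_{m-1}\chi_{E^m}=\mathbb{E}_{m-1}u=u^{m-1}$. Since each $u_\eps^{*m}$ is monotone decreasing in $x_m$ by construction, so is the limit, which forces $E^m=\{x_m<f(x_1,\dots,x_{m-1})\}$ for some measurable $f$; computing vertical slice volumes shows $f=\alpha(u^{m-1})$, hence $E^m=ES_{m-1}(u^{m-1})$. Proposition \ref{egalite} then yields $P_\gga(E^m)=\barf(u^{m-1})$. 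Combining and sending $m\to\infty$, using the monotone convergence $\barf(u^{m-1})\nearrow\barf(u)$ established in the proof of Theorem \ref{relax}, gives $\varliminf_\eps F_\eps(u_\eps)\ge c_W\barf(u)$.

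\textbf{$\gG$-$\limsup$ inequality.} For $u$ with $\barf(u)<+\infty$, set $E_k:=ES_k(u^k)$ with $u^k:=\mathbb{E}_k u$. For any cylindrical test function $v$ depending on the first $M\le k$ coordinates, Fubini gives
\[
\int_X \chi_{E_k}\,v\,d\gga=\int_X u^k\, v\,d\gga=\int_X u\, v\,d\gga,
\]
so $\chi_{E_k}\rightharpoonup u$ weakly in $\Ldeu$, and $P_\gga(E_k)=\barf(u^k)\to\barf(u)$ by Proposition \ref{egalite} and the proof of Theorem \ref{relax}. Since $E_k$ depends only on the first $k+1$ coordinates, the finite-dimensional Gaussian Modica-Mortola recovery construction applied to $\chi_{E_k}$ (viewed on $\R^{k+1}$) yields $u_\eps^k\in H^1_\gga(X)$ with $u_\eps^k\to\chi_{E_k}$ strongly in $L^1_\gga$ and $F_\eps(u_\eps^k)\to c_W\, P_\gga(E_k)$ as $\eps\to 0$. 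A standard diagonal extraction then provides $k(\eps)\to\infty$ with $u_\eps^{k(\eps)}\rightharpoonup u$ and $\varlimsup_\eps F_\eps(u_\eps^{k(\eps)})\le c_W\barf(u)$.

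The chief obstacle is the lower bound: one must reconcile the absence of strong $L^1$-compactness in infinite dimensions with the essentially pointwise nature of the Modica-Mortola trick. Ehrhard symmetrization restores finite-dimensional compactness, but one must then correctly identify the limiting symmetrized set as $ES_{m-1}(u^{m-1})$ via weak-limit consistency of the cylindrical projections, which is the technical heart of the proof.
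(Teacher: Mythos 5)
Your proof is correct, but your $\gG$-$\liminf$ argument takes a genuinely different route from the paper's. The paper combines the pointwise Modica--Mortola inequality with the coarea formula to get $F_\eps(u_\eps)\ge\int_0^1\sqrt{2W(t)}\,P_\gga(\{u_\eps>t\})\,dt$, observes that $\gga(\{\delta\le u_\eps\le 1-\delta\})\to 0$ forces $\chi_{\{u_\eps>t_\eps\}}\rightharpoonup u$ for any levels $t_\eps\in[\delta,1-\delta]$, selects $t_\eps$ by a mean-value argument, and then simply invokes the relaxation Theorem \ref{relax}, which already contains all the symmetrization work at the sharp-interface level. You instead symmetrize the diffuse functions $u_\eps$ themselves via the P\'olya--Szeg\"o inequality of Proposition \ref{ehrfunc} (which the paper proves but does not actually use in this theorem), reduce to $\R^m$ where classical Modica--Mortola compactness applies, and identify the limit set as $ES_{m-1}(u^{m-1})$ through the invariance $\E_{m-1}u_\eps^{*m}=\E_{m-1}u_\eps$ and monotonicity in $x_m$, reusing from the proof of Theorem \ref{relax} only the monotone convergence $\barf(u^{m-1})\nearrow\barf(u)$. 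The paper's route is shorter because the relaxation theorem absorbs the compactness and identification steps once and for all; yours re-derives them at the diffuse level, at the price of needing Proposition \ref{ehrfunc}, but it makes the lower bound self-contained modulo the finite-dimensional theory. The $\limsup$ arguments are essentially the same (reduction to cylindrical Ehrhard sets plus the one-dimensional profile), except that the paper writes the profile out explicitly via the distance $d^H$ while you black-box the finite-dimensional recovery construction. Two points you should make explicit: the preliminary truncation to $0\le u_\eps\le 1$, which the paper performs at the outset and which your use of $G$ on $[0,1]$ and of convergence to $\{0,1\}$-valued limits tacitly assumes; and the fact that $E_k=ES_k(u^k)$ need not be smooth, so the finite-dimensional recovery step requires an additional smooth approximation and diagonalization inside $\R^{k+1}$ (the paper handles this via Proposition \ref{denscyl}).
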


\begin{proof}
Notice first that the $\gG$-limit does not change if
we restrict the domain of $F_\eps$ to the functions $u\in H^1_\gga(X)$ such that $0\le u\le 1$.
This follows from the following two facts:
\begin{itemize}
\item[-] for all $u\in H^1_\gga(X)$, letting $\tilde u=\min(\max(u,0),1)$, we have $F_\eps(\tilde u)\le F_\eps(u)$;
\item[-] $F_\eps(u)\ge \int_X \frac{W(u)}{\eps}d\gga$ for all $u\in H^1_\gga(X)$, which implies that the $\gG$-limit
is concentrated on the functions $u\in\Ldeu$ such that $u(x)\in \{0,1\}$ for a.e. $x\in X$.
\end{itemize}
Since the restricted domain is contained in the unit ball of $\Ldeu$,
which is metrizable for the weak $\Ldeu$-topology, by Theorem \ref{thmetric} 
the $\gG$-limit and the sequential $\gG$-limit of $F_\eps$ coincide.

\smallskip

\noindent We now compute the $\gG$-liminf of $F_\eps$. 

\noindent Let $u_\eps\in H^1_\gga(X)$ be such that $0\le u_\eps\le 1$
and $F_\eps(u_\eps)\le C$ for some $C>0$.
Then $\int_X W(u_\eps)d\gga \le C\eps$, which gives a uniform bound on $\|u_\eps\|_{\Ldeu}$
recalling that $W(u)\ge C(u^2-1)$. 
As a consequence, there exists a weakly converging subsequence, still denoted by $u_\eps$. 
Letting $u$ be its weak limit, from $0\le u_\eps\le 1$ we get $0\le u\le 1$. 
Using the coarea formula \eqref{coarea}, we obtain the estimate
\begin{align*}
F_\eps(u_\eps) &= \int_X  \left(\frac{\eps}{2}\normH{\nabla_H u}^2+ \frac{W(u)}{\eps}\right) d\gga \\
&\ge \int_X\sqrt{2W(u_\eps)} \, \normH{\nabla_H u}\, d\gga\\
&=\int_{0}^{1} \sqrt{2W(t)} \, \Pgamma{\{u_\eps >t\}}\, dt\,.
\end{align*}

\noindent Fix now $\delta>0$. From the fact that $\vgamma{\{\delta\le u_\eps\le 1-\delta\}}\to 0$ as $\eps\to 0$,
it follows that, for every sequence $t_\eps \in [\delta,1-\delta]$,
then functions $\chi_{\{u_\eps >t_\eps\}}$ tend weakly to $u$ in $\Ldeu$. 
For every $\eps>0$ let us choose $t_\eps \in [\delta,1-\delta]$ such that 
\[
\int_{\delta}^{1-\delta} \sqrt{2W(t)}\Pgamma{\{u_\eps>t\}}dt \ge \left(\int_{\delta}^{1-\delta} \sqrt{2W(t)}dt \right)\Pgamma{\{u_\eps>t_\eps\}}.
\]
Then, by Theorem \ref{relax} we have
\begin{align*}
\varliminf_{\eps \to 0} F_\eps(u_\eps)&\ge \varliminf_{\eps\to 0} \left(\int_{\delta}^{1-\delta} \sqrt{2W(t)}dt \right)\Pgamma{\{u_\eps>t_\eps\}}\\
&\ge  \left(\int_{\delta}^{1-\delta} \sqrt{2W(t)}dt \right) \barf(u)\,.
\end{align*}
Since $\delta$ is arbitrary we get the $\gG$-liminf inequality.
 
\smallskip
 
\noindent The $\gG$-limsup is done similarly to the (Euclidean) finite dimensional case \cite{modicamortola,modica}. 
Since $\barf$ is the relaxation of $F$ in the weak $\Ldeu$-topology and since we can approximate sets 
of finite perimeter by smooth cylindrical sets by Proposition \ref{denscyl}, 
for every $u\in BV_\gga(X)$ with $0\le u\le 1$ there exists a sequence $E_n$ 
of smooth cylindrical sets with $\chi_{E_n}$ converging weakly to $u$ and such 
that $\Pgamma{E_n}$ tends to $\barf(u)$. This shows that we can restrict ourselves 
to smooth cylindrical sets for computing the $\gG$-limsup of $F_\eps$.
 
\noindent Let $m\in\mathbb N$ and $E=\Pi_m^{-1}(E_m)$, where $E_m\subset\R^m$ is a smooth set with finite Gaussian perimeter, and let 
\[ d^H(x,E):=d(\Pi_m(x),E_m)\]
where $d(x,E_m)$ is the usual distance function from $E_m$ in $\R^m$. Notice that 
\[
d^H(x,E)=\min \{ \normH{x-y} ; y\in E, x-y \in H\},
\]
moreover $d^H$ is differentiable almost everywhere 
with $\normH{\nabla_H d^H(x,E)}=1$. 

\noindent Let $\gd>0$, $\alpha_\gd:=\max\{W(t) \; : \; t\in [0,\delta] \cup [1-\delta,1]\}$ and define  
$W_\gd,\,H_\gd : [0,1]\rightarrow \R$ as
\begin{eqnarray*}
W_\gd(t)&:=&\left\{\begin{array}{ll}
\alpha_\gd & \textrm{if } 0\le t\le \delta\\
W(t) & \textrm{if } \gd\le t\le 1-\gd\\
\alpha_\gd & \textrm{if } 1-\gd\le t\le 1.\end{array}\right.
\\
\\
H_\gd(t)&:=&\int_0^t\frac{1}{\sqrt{2W_\gd(s)}}ds.
\end{eqnarray*} 
Finally let $\eta_\delta$ be the usual truncated one-dimensional transition profile defined as
\[\eta_\gd(t):=\left\{\begin{array}{ll}
0& \textrm{if } t\le 0\\
H_\gd^{-1}(t) &\textrm{if } 0\le t\le H_\gd(1)\\
1 & \textrm{if } t> H_\gd(1).\end{array}\right.\]

\noindent Observe that $\eta_\gd$ is a Lipschitz function which verifies $\frac{\eta_\delta'^2}{2}=W_\delta(\eta_\delta)$. We then set 
\[u_\eps(x):=\eta_\delta\left(\frac{d^H(x,E)}{\eps}\right).\]
 We finally have
 \begin{align*}
 F_\eps(u_\eps)&=\int_X \left(\frac{\eps}{2}\normH{\nabla_H u_\eps}^2  + \frac{W(u_\eps)}{\eps}\right) d\gga\\
 								&\le \int_X \left(\frac{\eps}{2}\normH{\nabla_H u_\eps}^2  + \frac{W_\delta(u_\eps)}{\eps}\right) d\gga\\
 								&=\int_{X} \frac{\eps}{2}{\eta_\delta'}^2\left(\frac{d(\Pi_m(x))}{\eps}\right)\left(\frac{|\nabla_{H} d(\Pi_m(x))|}{\eps}\right)^2\\
 								&\qquad + \frac{1}{\eps}W_\delta\left(\frac{\eta_\delta(d(\Pi_m(x)))}{\eps}\right) d\gga\\
 								&=\int_{\R^m} \left[\frac{1}{2}{\eta_\delta'}^2\left(\frac{d}{\eps}\right] 
 								+W_\delta\left(\eta_\delta\left(\frac{d}{\eps}\right)\right) \right)                           \frac{|\nabla d|}{\eps}\,d\gga_m\\
								&=\int_0^{H_\gd(1)} \left(\frac{{\eta_\delta'}^2(t)}{2}+ W_\delta(\eta_\delta(t))\right)P_{\gamma_m}(\{d>\eps t\})\,dt. 				
 								\end{align*}
 		The proof is completed since for every $t\in[0,H_\gd(1)]$, $P_{\gamma_m}(\{d>\eps t\})$ tends to $P_{\gamma_m}(E_m)$ as $\eps\to 0$, and 
 		\[\int_{0}^{H_\gd(1)} \left(\frac{\eta_\delta'^2(t)}{2}+ W_\delta(\eta_\delta(t))\right)\,dt=\int_{0}^1 \sqrt{2W_\delta(t)}\,dt\,.\]
 		Thus we have 
 		\[\varlimsup_{\eps\to 0} F_\eps(u_\eps)\le \left(\int_{0}^1 \sqrt{2W_\delta(t)}\,dt\right)P_{\gga_m}(E_m),\]
 		which gives the desired inequality letting $\delta\to 0$ and $m\to+\infty$.
\end{proof}

\begin{remarque}\rm As in the Euclidean case, a similar result can be proven for the volume constrained problems. 
In this case, the proof of the $\gG$-liminf is exactly the same as in Theorem \ref{modmortMal}, 
and the $\gG$-limsup is also very similar. The only difference comes 
from the fact that we have to adapt the recovery sequence to have the right volume, 
and this can be done as in \cite{modica} by slightly translating $\eta_\delta$.
\end{remarque}

\noindent We now state some simple implications of the $\gG$-convergence result.
\begin{prop}
Let $m\in[0,1]$ and $u_\eps$ be a minimizer of 
\begin{equation}\label{probvol}
\min_{\int_X u \,d\gga=m} \int_X  \left(\frac{\eps}{2}\normH{\nabla_H u}^2 + \frac{W(u)}{\eps} \right) d\gga 
\end{equation}
then $u_\eps=v_\eps(\hath_\eps(x))$ for some $\hath_\eps \in \cH$  with $\normH{h_\eps}=1$ and some $v_\eps$ minimizer of the one-dimensional problem
\begin{equation}\label{prob1d}
\min_{\int_{\R} v d\gga_1 =m} \int_\R  \frac{\eps}{2} v'^2 d\gga +\int_\R \frac{W(v)}{\eps} d\gga_1. 
\end{equation}
in particular, $v_\eps$ (strongly) converges to the characteristic function of a half-line.
\end{prop}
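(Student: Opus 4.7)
The plan is to reduce the infinite-dimensional minimization to a one-dimensional one via the Ehrhard symmetrization, exploiting the rigidity of the Pólya-Szegő inequality in the case $m=1$ (Proposition \ref{ehrfunc}), and then invoke the classical Modica-Mortola analysis on $\R$ via Theorem \ref{modmortMal}.

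First I would apply the one-dimensional Ehrhard symmetrization $u_\eps^*$ to $u_\eps$. Since this symmetrization preserves the distribution function of $u_\eps$, it preserves both the mass constraint $\int_X u_\eps \, d\gga = m$ and the potential term $\int_X W(u_\eps)\, d\gga$ (the latter because $W\circ u$ depends only on the level sets of $u$). The Pólya-Szegő inequality \eqref{polya} then gives $\int_X \normH{\nabla_H u_\eps^*}^2 \, d\gga \le \int_X \normH{\nabla_H u_\eps}^2 \, d\gga$, so $F_\eps(u_\eps^*) \le F_\eps(u_\eps)$. By minimality of $u_\eps$, equality must hold in \eqref{polya}, and the rigidity part of Proposition \ref{ehrfunc} (which requires $m=1$) then yields a unitary $\hat h_\eps \in \cH$ and a measurable $v_\eps : \R \to [0,1]$ such that $u_\eps(x) = v_\eps(\hat h_\eps(x))$.

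Next I would verify that $v_\eps$ solves \eqref{prob1d}. Since $\hat h_\eps \sharp \gga = \gga_1$ and $\normH{\nabla_H u_\eps}(x) = |v_\eps'(\hat h_\eps(x))|$, a direct change of variable identifies the mass constraint and the energy of $u_\eps$ on $X$ with those of $v_\eps$ on $\R$; conversely, any competitor $v \in H^1_{\gga_1}(\R)$ with $\int v\,d\gga_1 = m$ lifts via $\hat h_\eps$ to an admissible $u(x):=v(\hat h_\eps(x))$ for \eqref{probvol} with the same energy, so $v_\eps$ must minimize \eqref{prob1d}.

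Finally, for the strong convergence of $v_\eps$ to the characteristic function of a half-line, I would apply Theorem \ref{modmortMal} with $X=\R$, $\gga=\gga_1$, which yields $\gG$-convergence to $c_W \barf$ in the weak $L^2_{\gga_1}(\R)$-topology. Since $0\le v_\eps \le 1$, weak $L^2_{\gga_1}$-compactness gives a subsequential limit $v$, which by the standard $\gG$-convergence of minima minimizes $\barf$ under the constraint $\int v\, d\gga_1 = m$. The bound $\int_\R W(v_\eps)\, d\gga_1 \le \eps F_\eps(v_\eps) \to 0$, together with the fact that $W$ vanishes only at $\{0,1\}$, forces $v\in\{0,1\}$ a.e., so $v = \chi_E$; the Gaussian isoperimetric inequality (Proposition \ref{priso}) then identifies $E$ as a half-line of $\gga_1$-measure $m$. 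Strong $L^2_{\gga_1}$-convergence follows from $v^2 = v$: by weak convergence, $\int v_\eps^2 \, d\gga_1 \le \int v_\eps \, d\gga_1 \to \int v\, d\gga_1 = \int v^2\, d\gga_1$, while weak lower semicontinuity of the $L^2$-norm gives the reverse inequality, so $\|v_\eps\|_{L^2_{\gga_1}} \to \|v\|_{L^2_{\gga_1}}$, which upgrades weak to strong convergence in the Hilbert space $L^2_{\gga_1}(\R)$. The main obstacle is the rigidity step: everything hinges on the $m=1$ equality case of Proposition \ref{ehrfunc}, a phenomenon specific to the Gaussian setting and without analogue in the Euclidean case.
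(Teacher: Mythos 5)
Your reduction to one dimension is exactly the paper's argument: symmetrize $u_\eps$ by the one--dimensional Ehrhard symmetrization, note that the mass constraint and the potential term are preserved by equimeasurability, use the P\'olya--Szeg\"o inequality \eqref{polya} and minimality to force equality, and then invoke the rigidity case of Proposition \ref{ehrfunc} to write $u_\eps=v_\eps(\hath_\eps)$; the identification of the energy of $u_\eps$ with the one--dimensional energy of $v_\eps$ via $\hath_\eps\sharp\gga=\gga_1$ and the lifting of competitors is also as in the paper.

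The last step, however, has a genuine gap. You pass to the limit using Theorem \ref{modmortMal} in the \emph{weak} $L^2_{\gga_1}$--topology, so you only know that a weak subsequential limit $v$ minimizes $\barf$ under the constraint $\int_\R v\,d\gga_1=m$. But this does not identify $v$ as a characteristic function: by Bobkov's inequality the minimum value is $\cU(m)$, and it is attained both by the half-line and by the \emph{constant} function $v\equiv m$ (for which $\barf(m)=\cU(m)$ since $D_{\gga_1}m=0$), so constrained minimality of $\barf$ alone proves nothing about the structure of $v$. Your attempted repair --- that $\int_\R W(v_\eps)\,d\gga_1\to 0$ forces $v\in\{0,1\}$ a.e. --- fails precisely because $W$ is a non-convex double-well: under weak convergence one cannot pass this information to the limit (finely oscillating $\{0,1\}$-valued functions have vanishing potential energy but weak limits strictly between $0$ and $1$). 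This is the very phenomenon that makes the infinite--dimensional relaxation nontrivial in the first place. The correct route, and the one the paper takes, is to exploit that in one dimension the compact embedding of $BV_{\gga_1}(\R)$ into $L^1_{\gga_1}(\R)$ \emph{does} hold, so the classical Modica--Mortola theorem applies verbatim: energy-bounded sequences are compact in the \emph{strong} topology, the $\gG$-limit in that topology is $c_W P_{\gga_1}$ (equal to $+\infty$ off characteristic functions), and then a.e.\ convergence plus Fatou forces the limit to be $\chi_E$ with $\gga_1(E)=m$, identified as a half-line by the isoperimetric inequality. Your final argument upgrading weak to strong convergence via $v_\eps^2\le v_\eps$ is fine, but it is only needed once $v=\chi_E$ has been established, which is exactly the point your proposal does not reach.
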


\begin{proof}
For every $u \in H^1_\gga(X)$, by Proposition \ref{ehrfunc}, we have $\int_X u^* d\gga=\int_X u d\gga$ and $F_\eps(u^*)\le F_\eps(u)$, with equality only if $u$ is of the form $u(x)=v(\hath(x))$ for some $\hath\in\cH$ with $\normH{h}=1$. Using that $\hath$ is the limit in $\Ldeu$ of linear functions of the form $R^*x_i^*$, it is readily seen that $\nabla_H \hath=h$, and thus we get
\[
F_\eps(u)=\int_X  \left(\frac{\eps}{2} v'(\hath(x))^2 + \frac{W(v(\hath(x)))}{\eps} \right)d\gga
=\int_\R \left( \frac{\eps}{2} v'^2 d\gga +\int_\R \frac{W(v)}{\eps}\right) d\gga_1.
\]
Therefore problem \eqref{probvol} reduces to the one-dimensional problem \eqref{prob1d}.

\noindent Using the compact embedding of $H^1_{\gga_1}(\R)$ in $L^2_{\gga_1}(\R)$ (see \cite[Th. 4.10]{AMMP}) 
and the direct method of the calculus of variations, we get that \eqref{prob1d} has a minimizer. 
Moreover, by the $\gG$-convergence of the one-dimensional functionals in the strong $L^2_{\gga_1}(\R)$-topology towards the a multiple of the perimeter 
(which can be obtained exactly as in the classical Modica-Mortola Theorem since compact embedding of $BV_{\gga_1}(\R)$ in $L^1_{\gga_1}(\R)$ holds), 
we find that every sequence of minimizers $v_\eps$ of \eqref{prob1d} has a subsequence strongly converging towards the 
characteristic of the half-line of measure $m$.   
\end{proof}

\noindent We finally give another convergence result for the prescribed curvature problem in case of uniqueness of minimizers.

\begin{prop}
Let $g\in \Ldeu$, then the following assertions are equivalent:
\begin{itemize}
\item the functional 
\begin{equation}\label{PPg}
F_g(E)=\Pgamma{E}+\int_E g d\gga\end{equation}
 has a unique minimizer in the class of sets of finite perimeter;
\item the functional 
\begin{equation}\label{Pug}
\barf_g(u)=\barf (u) + \int_X u g d\gga \end{equation}
has a unique minimizer in $ BV_\gga(X)$.
\end{itemize}
Moreover, when this holds the two minimizers  coincides. Finally, if $u_\eps$ is a sequence in $H^1_\gga(X)$ satisfying 
\begin{equation*}\sup_\eps \left(F_\eps(u_\eps) +\int_X u_\eps g d\gga \right) \le C\end{equation*}
for some $C>0$,
then $u_\eps$ has a subsequence strongly converging to $\chi_E$ in $\Ldeu$, where $E$ is the common minimizer of \eqref{PPg} and \eqref{Pug}.
\end{prop}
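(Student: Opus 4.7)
The plan is to derive a single level-set inequality that forces every $\barf_g$-minimizer to be a characteristic function whose level sets minimize $F_g$. For any $u\in BV_\gga(X)$ with $0\le u\le 1$ $\gga$-a.e., I would first prove the key inequality
\[
\barf_g(u)\;\ge\;\int_0^1 F_g\big(\{u>t\}\big)\,dt,
\]
with equality only if $u\in\{0,1\}$ $\gga$-a.e. This comes from three simple ingredients: the pointwise bound $\sqrt{\cU^2(u)+|D_\gga u|^2}\ge |D_\gga u|$, which gives $\barf(u)\ge |D_\gga u|(X)$ with equality iff $\cU(u)=0$ $\gga$-a.e., i.e.\ iff $u\in\{0,1\}$ $\gga$-a.e.; the coarea formula of Proposition \ref{procoarea}, $|D_\gga u|(X)=\int_0^1 \Pgamma{\{u>t\}}\,dt$; and Fubini's identity $\int_X u g\,d\gga=\int_0^1 \int_{\{u>t\}} g\,d\gga\,dt$, valid because $0\le u\le 1$.

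Since $g\in\Ldeu$, the functional $v\mapsto \int_X v g\,d\gga$ is weakly $\Ldeu$-continuous, so Theorem \ref{relax} implies that $\barf_g$ is the weak $\Ldeu$-relaxation of $F_g$; in particular $\min\barf_g=\inf F_g$. Chaining this identity with the key inequality,
\[
\barf_g(u)\;\ge\;\int_0^1 F_g\big(\{u>t\}\big)\,dt \;\ge\; \inf F_g\;=\;\min\barf_g,
\]
so for any $\barf_g$-minimizer $u$ equality holds throughout, which forces $u\in\{0,1\}$ $\gga$-a.e.\ and also that $\{u>t\}$ is a $F_g$-minimizer for a.e.\ $t\in(0,1)$.

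The equivalence of the two uniqueness statements then follows formally. If $F_g$ has a unique minimizer $E$, any $\barf_g$-minimizer $u$ satisfies $\{u>t\}=E$ up to $\gga$-null sets for a.e.\ $t\in(0,1)$, and recovering $u$ from its level sets gives $u=\chi_E$; hence $\chi_E$ is the unique $\barf_g$-minimizer and the two minimizers coincide. Conversely, if $\barf_g$ has a unique minimizer $u$, then by the previous step $u=\chi_A$ for some $A$ which is a $F_g$-minimizer, and any other $F_g$-minimizer $A'$ yields $\chi_{A'}$ as another $\barf_g$-minimizer, forcing $\chi_{A'}=u=\chi_A$.

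For the final convergence statement, interpreting the hypothesis as saying that $u_\eps$ is almost-minimizing for $F_\eps+\int_X\cdot\, g\,d\gga$ (the natural reading once one invokes Theorem \ref{modmortMal}), I would truncate to $0\le u_\eps\le 1$ without increasing the energy, extract a weakly convergent subsequence in $\Ldeu$, and use the $\Gamma$-liminf together with a recovery sequence converging to $\chi_E$ to force the weak limit to be a $\barf_g$-minimizer, hence $\chi_E$ by uniqueness. Strong $\Ldeu$-convergence then follows from $\|u_\eps\|_{\Ldeu}^2\le \int_X u_\eps\,d\gga\to \gga(E)=\|\chi_E\|_{\Ldeu}^2$, combined with weak lower semicontinuity of the $\Ldeu$-norm. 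The main obstacle is establishing the key level-set inequality with its sharp equality case; everything else then reduces to clean applications of the relaxation and $\Gamma$-convergence results already in hand.
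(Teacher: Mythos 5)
Your proposal is correct and follows essentially the same route as the paper: the chain $\barf_g(u)\ge |D_\gga u|(X)+\int_X ug\,d\gga=\int_0^1 F_g(\{u>t\})\,dt$ via the coarea formula, the identity $\inf\barf_g=\inf F_g$ from Theorem \ref{relax}, the resulting fact that $\barf_g$-minimizers are characteristic functions of $F_g$-minimizers (with the equality case $\cU(u)=0$ a.e.), and the norm-convergence trick $\|u_\eps\|_{\Ldeu}^2\le\int_X u_\eps\,d\gga\to\gga(E)$ for upgrading weak to strong convergence. Your reading of the final hypothesis as an almost-minimizing condition matches what the paper's own argument actually uses.
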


\begin{proof}
We first notice that the problem \eqref{PPg} always has a solution. Indeed, arguing as in \cite{CaMiNo}, if $E_n$ is a minimizing sequence 
for \eqref{PPg}, it has a  subsequence weakly converging to some $u\in BV_\gga(X)$. 
By the lower semicontinuity of the total variation and the coarea formula we then have
\[
\inf_E \left(\Pgamma{E}+\int_E g d\gga \right)\ge \totvar{u}+\int_X u g d\gga
=\int_{0}^{1} \left(\Pgamma{ \{u> t\}} +\int_{\{u>t\}} g(x) d\gga(x) \right) dt
\]
and thus the sets $\{u>t \}$ minimize $F_g$ for almost every $t$.
As $\barf$ is the relaxation of the perimeter we have that the minimum values in \eqref{PPg} and \eqref{Pug} are the same and thus any minimizer of $F_g$ is also a minimizer of $\barf_g$. This shows that if uniqueness does not hold in \eqref{PPg} then it does not hold in \eqref{Pug}, too. 
Now, if $u$ is a minimizer of $\barf_g$, applying the coarea formula once again we get
\[\inf_E F_g(E)= \barf_g (u) \ge \totvar{u} +\int_X ug d\gga =\int_{0}^1 \left(\Pgamma{ \{u> t\}} +\int_{\{u>t\}} g(x) d\gga(x) \right) dt.\]

\noindent As above, this implies that  $\{u>t \}$ solves \eqref{PPg} for almost every $t$. Therefore, if the minimizer of  $\barf_g$ is not a characteristic function, then uniqueness does not hold neither in \eqref{PPg} nor in \eqref{Pug}. This proves the first part of the Proposition.

\noindent The second statement easily follows from Theorem \ref{modmortMal}.  
Indeed, as the functionals $F_\eps(u) +\int_X ug d\gga$ $\gG$-converge to $\barf_g$ in the weak $\Ldeu$-topology, 
for every sequence $u_\eps$ bounded in energy, there exists a subsequence weakly converging to $\chi_E$ 
(where $E$ is the unique minimizer of \eqref{PPg} and \eqref{Pug}). However, by the lower semicontinuity of the norm,
\[m^{\frac{1}{2}}\ge \varliminf_{\eps\to 0}  \|u_\eps\|_{\Ldeu}\ge \|\chi_E\|_{\Ldeu}=m^{\frac{1}{2}}.\]
Thus $\|u_\eps\|_{\Ldeu}$ converges to $\|\chi_E\|_{\Ldeu}$, which implies the strong convergence of $u_\eps$.
\end{proof}

\begin{remarque}\rm
In \cite{GN}, we provide an example of functionals for which uniqueness of minimizers holds, namely
\[
\Pgamma{E}+\int_X (g-\gl)\, d\gga
\]
where $g:X\to \R$ is convex and $\gl\in (0,+\infty)$ is large enough.
\end{remarque}

\end{document}